\renewcommand{\b}{{\bm{b}}}
\renewcommand{\k}{{\bm{k}}}
\renewcommand{\c}{{\bm{c}}}
\renewcommand{\S}{\mathcal{S}}
\newcommand{\e}{{\bm{e}}}
\newcommand{\R}{{\mathbb{R}}}
\renewcommand{\r}{{\bm{r}}}
\newcommand{\s}{{\bm{s}}}
\renewcommand{\L}{\mathcal{L}}
\newcommand{\X}{\mathcal{X}}
\newcommand{\Y}{\mathcal{Y}}
\newcommand{\w}{{\bm{w}}}
\newcommand{\x}{{\bm{x}}}
\newcommand{\y}{{\bm{y}}}
\newcommand{\z}{{\bm{z}}}
\newcommand{\bz}{{\bm{0}}}
\newcommand{\T}{\top}
\newcommand{\bigO}{{\mathcal{O}}}
\newcommand{\sgn}{{\mathop{\mathrm{sgn}}}}
\newcommand{\proj}{{\mathrm{proj}}}
\newcommand{\argmin}{{\mathop{\mathrm{argmin}}}}
\newcommand{\argmax}{{\mathop{\mathrm{argmax}}}}
\newtheorem{lemma}{Lemma}
\newtheorem{theorem}{Theorem}
\newcommand{\eps}{\epsilon}
\newcommand{\algorithmicbreak}{\textbf{break}}
\newcommand{\Break}{\State \algorithmicbreak}
\newcommand{\be}{\begin{equation}}
\newcommand{\ee}{\end{equation}}
\title{A Primal-Dual Frank-Wolfe Algorithm for Linear Programming\thanks{This research was supported in part by a Discovery Grant from the Natural Sciences and Engineering Research Council (NSERC) of Canada.}}
\author{Matthew Hough\thanks{Department of Combinatorics \& Optimization,
University of Waterloo, 200 University Ave.~W., Waterloo, ON, N2L 3G1,
Canada, {\tt mhough@uwaterloo.ca}.} \and 
Stephen A.~Vavasis\thanks{Department of Combinatorics \& Optimization,
University of Waterloo, 200 University Ave.~W., Waterloo, ON, N2L 3G1,
Canada, {\tt vavasis@uwaterloo.ca.}}}
\begin{document}
\maketitle
\begin{abstract}
  We present two first-order primal-dual algorithms for solving saddle point formulations of linear programs, namely
  FWLP (Frank-Wolfe Linear Programming) and FWLP-P. The former iteratively applies the Frank-Wolfe algorithm to both
  the primal and dual of the saddle point formulation of a standard-form LP. The latter is a modification of FWLP in
  which regularizing perturbations are used in computing the iterates. We show that FWLP-P converges to a primal-dual
  solution with error $\bigO(1/\sqrt{k})$ after $k$ iterations, while no convergence guarantees are provided for FWLP.
  We also discuss the advantages of using FWLP and FWLP-P for solving very large LPs.
  In particular, we argue that only part of the matrix $A$ is needed at each iteration, in contrast to other first-order methods.
\end{abstract}

\section{Introduction}
In recent years, data science applications have given birth to problems of very large scale. This poses a problem for mature LP solvers that require solving a system of linear equations at each iteration.
First-order methods (FoMs) for linear programming aim to solve LPs in such a way that their most expensive operation at each iteration is the product of a matrix and a vector.
Their goal is to provide an alternative to the practitioner over LP algorithms such as the simplex method or interior point methods for large-scale problems.

The Frank-Wolfe algorithm \cite{FrankWolfe1956}, also referred to as the conditional gradient algorithm, is a FoM for minimizing a smooth convex objective function
over a compact convex set. A major benefit of the Frank-Wolfe algorithm is that each iteration requires only the solution of a linear optimization problem over
a convex constraint set, a problem which can be solved efficiently over many constraint sets used in practice. It is known that the Frank-Wolfe algorithm converges at a rate of
$\bigO(1/k)$ \cite{Jaggi2013}.

The focus of this paper is on finding optimal solutions to linear programs in standard form, that is, solving the following optimization problem:
\begin{mini} 
{} { \c^T\x }{ \label{eq:std-lp} }{} 
\addConstraint{ A\x }{ = \b }{}
\addConstraint{ \x }{ \geq \bz }{},
\end{mini}
where we assume throughout that \eqref{eq:std-lp} has an optimal solution. The dual linear program associated with \eqref{eq:std-lp} is
\begin{maxi} 
{} { \b^T\y }{ \label{eq:std-dual} }{} 
\addConstraint{ A^T\y }{ \leq \c }{}.
\end{maxi}

We propose two first-order primal-dual algorithms for simultaneously solving \eqref{eq:std-lp} and \eqref{eq:std-dual} inspired by the Frank-Wolfe algorithm but using the non-standard step-size of $1/(k+1)$ analyzed in \cite{Freund2016}. We call our algorithms FWLP and FWLP-P.
FWLP, first introduced in \cite{Hough2023}, is derived from iteratively applying the Frank-Wolfe algorithm to the primal and dual
problems of a modified saddle-point formulation of \eqref{eq:std-lp}:
\begin{equation} \label{eq:fwlp_saddle}
  \min_{\x\in\Delta}\max_{\y\in\Gamma}\L(\x,\y) := \c^T\x + \y^T(\b - A\x),
\end{equation}
where we define 
\be\label{eq:DeltaGamma}
\Delta = \{\x \in \R^n : \x \geq \bz, \e^T\x \leq \xi\}\mbox{ and }\Gamma = [-\eta,\eta]^m.
\ee
Let $(\x^*,\y^*)$ denote an optimal primal-dual pair of solutions to \eqref{eq:std-lp} and \eqref{eq:std-dual}. The parameters $\xi,\eta > 0$ are assumed to be chosen large enough to ensure that $2\xi \geq \lVert\x^*\rVert_1$ and
$2\eta \geq \lVert\y^*\rVert_{\infty}$, thus describing redundant constraints. Despite their redundance, these constraints
are necessary for compactness of the feasible sets corresponding to the primal and dual subproblems solved by FWLP.
FWLP-P is a modification of the original FWLP algorithm in which regularizing perturbations are introduced when computing the iterates.
FWLP-P is of theoretical importance to FWLP since we are able to prove the convergence of FWLP-P but no convergence proof is known yet for FWLP.

With the ubiquity of very large-scale problems, FoMs for linear programming have seen interest of late \cite{PDLP1,PDLP2,ECLIPSE2020,SNIPAL2020}.
Such methods typically apply existing ideas and algorithms from continuous optimization to linear programming.
Our work is related particularly to \cite{Gidel2017}, where Gidel et al.\
use the Frank-Wolfe algorithm to solve general convex-concave saddle point problems:
\begin{equation} \label{eq:general_saddle}
  \min_{\x \in \X}\max_{\y \in \Y} f(\x,\y),
\end{equation}
where $f$ is a smooth convex-concave function and $\X\times\Y$ is a convex compact set. In analyzing the convergence of their method,
the authors introduce a potential function bounding the distance to optimality and then show that this potential function
decreases to zero at a given rate. We employ this technique in Section~\ref{sec:convergence} to analyze the convergence of FWLP-P.  The results of \cite{Gidel2017} do not directly apply to \eqref{eq:fwlp_saddle} because their strong-convexity assumption is not satisfied.
The FWLP algorithm is also closely related to the Generalized Fictitious Play Algorithm proposed by Hammond in her 1984 PhD thesis \cite{Hammond1984}.
Applying Hammond's algorithm to \eqref{eq:fwlp_saddle} yields an algorithm very similar to FWLP, the only difference being that
the update of $\y_k$ is based on $\x_k$ instead of $\x_{k+1}$ like in FWLP. For a more in-depth analysis of the similarities
between FWLP and Hammond's Generalized Fictitious Play, see \cite[Chapter 6.2]{Hough2023}.

The algorithms FWLP 
and FWLP-P are described in Section~\ref{sec:algorithms}.  Iterations of FWLP are simpler and faster than those of FWLP-P, but we do not have a convergence proof of FWLP.  Our preliminary computational tests (not reported here) indicate that the two algorithms converge at comparable rates.  

Our convergence analysis of FWLP-P, presented in Section~\ref{sec:convergence}, first introduces a potential function $U_k$ which we show that for the iterates
of FWLP-P has distance $\bigO(1/\sqrt{k})$ from zero after $k$ iterations. To complete the analysis, we show that as $U_k \to 0$,
FWLP-P and FWLP converge to a primal-dual solution of \eqref{eq:std-lp}. The potential $U_k$ is similar to a traditional primal-dual optimality gap as discussed in Section~\ref{sec:traditionalgap}. A secondary contribution of this paper is the discussion in Section~\ref{sec:implementation} of how
FWLP and FWLP-P can be implemented efficiently in a way that only part of the matrix $A$ is needed at each iteration.

\section{FWLP and FWLP-P}
\label{sec:algorithms}
At each iteration, FWLP performs a Frank-Wolfe update on the primal and the dual of \eqref{eq:fwlp_saddle} using the step-size
$1/(k+1)$ instead of the standard step-size of $1/(k+2)$. The step-size $1/(k+1)$ was first analyzed by Freund and Grigas in \cite{Freund2016}.
Notably, FWLP uses the information obtained from the primal update, $\x_{k+1}$, in the computation of the dual update:
\begin{align}
  \r_{k+1} &:= \argmin_{\r} \left\{(\c-A^T\y_k)^T\r:
  \r \in \Delta\right\},
  \label{eq:fwlp-rk1}
  \\
  \x_{k+1}&:=\frac{k}{k+1}\x_k + \frac{1}{k+1}\r_{k+1},
  \label{eq:fwlp-xk1}
  \\
  \s_{k+1}&:=\argmax_{\s}\left\{(\b-A\x_{k+1})^T\s:
  \s\in\Gamma \right\},
  \label{eq:fwlp-sk1}
   \\
  \y_{k+1}&:=\frac{k}{k+1}\y_k+\frac{1}{k+1}\s_{k+1}.
  \label{eq:fwlp-yk1}
\end{align}
It is not hard to see that steps \eqref{eq:fwlp-rk1} and \eqref{eq:fwlp-sk1} above can be written in closed form (for more detail, see \cite[Chapter 6]{Hough2023}),
giving rise to Algorithm~\ref{alg:fwlp}. In this algorithm and for the remainder of the paper, $\e_i$ denotes the $i$th column of the identity matrix, whose length is determined from the context.

\begin{algorithm}[H]
\begin{algorithmic}[1]
\Require Starting points $\x_0\in\R^n_+, \y_0 \in\R^m$, constraint data $A$ and $\bm{b}$, and objective $\c$.
\vspace{0.2em}
\Statex \underline{Parameters:} $\xi, \eta > 0$ such that $2\lVert\x^*\rVert \leq \xi$ and $2\lVert\y^*\rVert \leq \eta$.
\For{$k=1,2,\ldots$}
    \State Determine $i = \argmin_{t} [\c - A^{\T}\y_k]_t$.
    \If{$[\c - A^{\T}\y_k]_{i} \geq 0$}
      \State Step towards zero in $\x$:
      \begin{align}\label{eqn:fwlp-x1}
        \x_{k+1} := \frac{k}{k+1}\x_k.
      \end{align}
    \Else
      \State Step toward $\xi$ for $x^{(i)}$, otherwise step toward zero for $x^{(j)}, j \neq i$:
      \begin{align}\label{eqn:fwlp-x2}
        \x_{k+1} &= \frac{k}{k+1}\x_k + \frac{\xi}{k+1}\e_i.
      \end{align}
    \EndIf \label{ln_main_end}
    \State Step towards $\pm \eta$ for each coordinate in $\y$ according to the sign pattern of $\b - A\x_{k+1}$:
    \begin{align}\label{eqn:fwlp-y1}
      \y_{k+1} &:= \frac{k}{k+1}\y_k + \frac{\eta}{k+1}\sgn(\b - A\x_{k+1}).
    \end{align}
\EndFor
\end{algorithmic}
\caption{FWLP: A primal-dual algorithm for \eqref{eq:std-lp} based on Frank-Wolfe \cite{Hough2023}.}
\label{alg:fwlp}
\end{algorithm}

FWLP-P adds the regularizing perturbations $\Vert \r\Vert^2/(2\sqrt{k})$ and $\Vert\s\Vert^2/(2\sqrt{k})$
to steps \eqref{eqn:fwlp-x1}/\eqref{eqn:fwlp-x2} and \eqref{eqn:fwlp-y1} in the above description of FWLP yielding \eqref{eq:rk1} and \eqref{eq:sk1} respectively in Algorithm~\ref{alg:fwlp-p}.
We analyze this algorithm in the next section.

Note that the computation of $\s_{k+1}$ in FWLP-P is a separable optimization problem allowing a simple median algorithm for each coordinate entry of $\s_{k+1}$.
The computation of $\r_{k+1}$ is more elaborate, but it requires only evaluation of the smallest
(most negative) elements of $\c - A^T\y_k$. We cover this in more detail in Section~\ref{sec:implementation} but of note here is that
FWLP-P retains some of the benefit of FWLP, namely sub-linear time computation per iteration, since only a part of $A$ is required. Note also that
the formulas \eqref{eq:rk1} and \eqref{eq:sk1} for $\r_{k+1}$ and $\s_{k+1}$ can be written equivalently as projections:
\begin{align}
  \r_{k+1}&:=\proj_{\Delta}(\sqrt{k}(A^T\y_k-\c)),
  \label{eq:rkproj}\\
  \s_{k+1}&:=\proj_{\Gamma}(\sqrt{k}(\b-A\x_{k+1})).
  \label{eq:skproj}
\end{align}
We discuss how to compute these projections efficiently in Section~\ref{sec:efficient_fwlp}.
\begin{algorithm}[H]
\begin{algorithmic}[1]
\Require Starting points $\x_0\in\R^n_+, \y_0 \in\R^m$, constraint data $A$ and $\bm{b}$, and objective $\c$.
\vspace{0.2em}
\Statex \underline{Parameters:} $\xi, \eta > 0$ such that $2\lVert\x^*\rVert \leq \xi$ and $2\lVert\y^*\rVert \leq \eta$.
\For{$k=1,2,\ldots$}
  \begin{align}
    \r_{k+1} &:= \argmin_{\r} \left\{(\c-A^T\y_k)^T\r + \frac{\Vert \r\Vert^2}{2\sqrt{k}}:
    \r\in\Delta\right\},
    \label{eq:rk1}
    \\
    \x_{k+1}&:=\frac{k}{k+1}\x_k + \frac{1}{k+1}\r_{k+1},
    \label{eq:xk1}
    \\
    \s_{k+1}&:=\argmax_{\s}\left\{(\b-A\x_{k+1})^T\s -\frac{\Vert\s\Vert^2}{2\sqrt{k}}:
    \s\in\Gamma\right\},
    \label{eq:sk1}
     \\
    \y_{k+1}&:=\frac{k}{k+1}\y_k+\frac{1}{k+1}\s_{k+1}.
    \label{eq:yk1}
  \end{align}
\EndFor
\end{algorithmic}
\caption{FWLP-P: FWLP with perturbations.}
\label{alg:fwlp-p}
\end{algorithm}

\section{Convergence analysis of FWLP-P} \label{sec:convergence}
The forthcoming analysis derives a recursion
\eqref{eq:recursion} for potential $U_k$ below satisfied by the iterates of FWLP-P. The recursion has
two perturbation terms $\delta_{k+1}$ and $\eps_{k+1}$ that are subsequently
bounded in \eqref{eq:epsk1_bound} and \eqref{eq:deltak1_bound}. We show that the quantity $U_k$ goes to 0 like $1/\sqrt{k}$.

\subsection{Deriving the recursion and potential function}
Let us introduce
\begin{equation}
  \delta_{k+1}:=\r_{k+2}^T(\c-A^T\y_{k+1})+\frac{1}{2\sqrt{k+1}}\Vert\r_{k+2}\Vert^2
  -\r_{k+1}^T(\c-A^T\y_{k+1})-\frac{k}{2(k+1)\sqrt{k}}\Vert \r_{k+1}\Vert^2.
  \label{eq:deltak1}
\end{equation}
Rearrange this equation:
\begin{equation}
  \delta_{k+1}-\r_{k+2}^T(\c-A^T\y_{k+1})-\frac{1}{2\sqrt{k+1}}\Vert\r_{k+2}\Vert^2
  =-\r_{k+1}^T(\c-A^T\y_{k+1})-\frac{k}{2(k+1)\sqrt{k}}\Vert \r_{k+1}\Vert^2.
  \label{eq:del1}
\end{equation}
Rewrite the first-term on the RHS of \eqref{eq:del1},
that is, $-\r_{k+1}^T(\c-A^T\y_{k+1})$,
using
\eqref{eq:yk1}:
\begin{align}
  -\r_{k+1}^T(\c-A^T\y_{k+1})&=-\r_{k+1}^T\left(\c-A^T\left(
  \frac{k}{k+1}\y_k+\frac{1}{k+1}\s_{k+1}\right)\right) \notag \\
  &=-\frac{k}{k+1}\r_{k+1}^T(\c-A^T\y_k)-\frac{1}{k+1}\r_{k+1}^T(\c-A^T\s_{k+1}).
  \label{eq:rewr1}
\end{align}
Substitute \eqref{eq:rewr1} for the first term on the RHS in \eqref{eq:del1},
moving the second term of \eqref{eq:rewr1}
$-\r_{k+1}^T(\c-A^T\s_{k+1})/(k+1)$ to the LHS, thereby rewriting \eqref{eq:del1} as:
\begin{multline}
\delta_{k+1}-\r_{k+2}^T(\c-A^T\y_{k+1})-\frac{1}{2\sqrt{k+1}}\Vert\r_{k+2}\Vert^2
+\frac{1}{k+1}\r_{k+1}^T(\c-A^T\s_{k+1}) \\
\shoveright{= -\frac{k}{k+1}\r_{k+1}^T(\c-A^T\y_k)-\frac{k}{2(k+1)\sqrt{k}}\Vert \r_{k+1}\Vert^2}
\\
=\frac{k}{k+1}\left(-\r_{k+1}^T(\c-A^T\y_k)-\frac{1}{2\sqrt{k}}\Vert \r_{k+1}\Vert^2\right).
  \label{eq:del2}
\end{multline}
The point of this
algebra is that the 2nd and 3rd terms
of the LHS correspond to the RHS advanced from $k$ to $k+1$,
thus setting up some of the terms of the recursion.

Next, introduce for $k \geq 2$
\begin{equation}
\eps_{k+1} := \frac{k}{k+1}\left(
-\s_{k+1}^T(\b-A\x_k)+\frac{\sqrt{k+1}}{2k}\Vert\s_{k+1}\Vert^2
+\s_k^T(\b-A\x_k)-\frac{1}{2\sqrt{k}}\Vert\s_k\Vert^2\right).
\label{eq:epsk1}
\end{equation}
Rewrite this equation as  
\begin{equation}
\eps_{k+1} + \frac{k}{k+1}\left(
\s_{k+1}^T(\b-A\x_k)-\frac{\sqrt{k+1}}{2k}\Vert\s_{k+1}\Vert^2\right)=
\frac{k}{k+1}\left(
\s_k^T(\b-A\x_k)-\frac{1}{2\sqrt{k}}\Vert\s_k\Vert^2\right).
\label{eq:eps1}
\end{equation}
Rearranging \eqref{eq:xk1} yields
$\x_{k}=(k+1)\x_{k+1}/k-\r_{k+1}/k$.  This means that the factor
$\s_{k+1}^T(\b-A\x_k)$ appearing in the LHS of \eqref{eq:eps1}
may be rewritten
\begin{align}
  \s_{k+1}^T(\b-A\x_k) &= \s_{k+1}^T\left(\b-A\left(\frac{k+1}{k}\x_{k+1}-\frac{1}{k}\r_{k+1}\right)\right) \notag \\
  &= \frac{k+1}{k}\s_{k+1}^T(\b-A\x_{k+1})-\frac{1}{k}\s_{k+1}^T(\b-A\r_{k+1}).
  \label{eq:rewr2}
\end{align}
Thus, the LHS of \eqref{eq:eps1} is rewritten:
\begin{align}
  \mbox{LHS of \eqref{eq:eps1}} &=
\eps_{k+1} + \frac{k}{k+1}\left(
\frac{k+1}{k}\s_{k+1}^T(\b-A\x_{k+1})-\frac{1}{k}\s_{k+1}^T(\b-A\r_{k+1})-\frac{\sqrt{k+1}}{2k}\Vert\s_{k+1}\Vert^2\right) \notag
\\
&=
\eps_{k+1} +
\s_{k+1}^T(\b-A\x_{k+1})
-\frac{1}{k+1}\s_{k+1}^T(\b-A\r_{k+1})
-\frac{1}{2\sqrt{k+1}}\Vert\s_{k+1}\Vert^2.
\label{eq:rewr3}
\end{align}

Thus, we have rewritten \eqref{eq:eps1} as
\begin{multline}
\eps_{k+1} +
\s_{k+1}^T(\b-A\x_{k+1})
-\frac{1}{k+1}\s_{k+1}^T(\b-A\r_{k+1})
-\frac{1}{2\sqrt{k+1}}\Vert\s_{k+1}\Vert^2
\\
=
\frac{k}{k+1}\left(
\s_k^T(\b-A\x_k)-\frac{1}{2\sqrt{k}}\Vert\s_k\Vert^2\right).
\label{eq:eps2}
\end{multline}
Here, we see the correspondence between the 2nd and 4th term on the LHS
with the two terms on the RHS (with $k$ advanced by 1).

Multiply \eqref{eq:xk1} by $\c$ and rearrange to obtain
\begin{equation}
\c^T\x_{k+1}-\frac{1}{k+1}\c^T\r_{k+1}=\frac{k}{k+1}\c^T\x_k.
\label{eq:cxk1}
\end{equation}
Similarly, from \eqref{eq:yk1},
\begin{equation}
-\b^T\y_{k+1}+\frac{1}{k+1}\b^T\s_{k+1}=-\frac{k}{k+1}\b^T\y_k.
\label{eq:byk1}
\end{equation}
Now add
\eqref{eq:del2},
\eqref{eq:eps2},
\eqref{eq:cxk1},
and \eqref{eq:byk1}, noting that many quantities on the LHS cancel,
while all quantities on the RHS contain the factor $k/(k+1)$, to
obtain a recursion:
\begin{equation}
\delta_{k+1}+\eps_{k+1} + U_{k+1} = \frac{k}{k+1} U_k,
\label{eq:recursion}
\end{equation}
where for $k\geq 2$
\begin{equation}
U_k:= -\r_{k+1}^T(\c-A^T\y_k)-\frac{1}{2\sqrt{k}}\Vert\r_{k+1}\Vert^2
+\s_k^T(\b-A\x_k) - \frac{1}{2\sqrt{k}}\Vert\s_k\Vert^2 + \c^T\x_k-\b^T\y_k.
\label{eq:udef}
\end{equation}
We call $U_k$ the potential function, and we will use this in our convergence analysis to bound the distance from optimality of \eqref{eq:std-lp} and \eqref{eq:std-dual}.

\subsection{Bounding the potential function}
Our next task is to lower bound $\delta_{k+1}$ and $\eps_{k+1}$ so that we
can use \eqref{eq:recursion} to develop a more useful bound on the potential function.

\begin{lemma} \label{lem:epsk1_bound}
  Recall $\epsilon_{k+1}$ defined in \eqref{eq:epsk1}. We have the bound
  \[
    \eps_{k+1}\ge -\frac{m\eta^2}{6k^2\sqrt{k-1}}.
  \]
\end{lemma}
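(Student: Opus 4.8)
The plan is to treat $\s_k$ as the exact maximizer of a strongly concave, separable box subproblem and to play the later iterate $\s_{k+1}$ against it as a feasible competitor. Recall that $\s_k$ maximizes over $\Gamma=[-\eta,\eta]^m$ the objective $\phi(\s):=\s^\T(\b-A\x_k)-\Vert\s\Vert^2/(2\sqrt{k-1})$, that is, \eqref{eq:sk1} with $k$ replaced by $k-1$, and this objective is $(1/\sqrt{k-1})$-strongly concave. Since $\s_{k+1}\in\Gamma$, optimality of $\s_k$ gives
\[
\s_k^\T(\b-A\x_k)-\frac{1}{2\sqrt{k-1}}\Vert\s_k\Vert^2 \ge \s_{k+1}^\T(\b-A\x_k)-\frac{1}{2\sqrt{k-1}}\Vert\s_{k+1}\Vert^2+\frac{1}{2\sqrt{k-1}}\Vert\s_k-\s_{k+1}\Vert^2.
\]
First I would use this to eliminate the linear terms $\s_k^\T(\b-A\x_k)-\s_{k+1}^\T(\b-A\x_k)$ occurring in the definition \eqref{eq:epsk1} of $\eps_{k+1}$; this is the step that avoids any dependence on $A$, since only the inequality above (valid for every competitor in $\Gamma$) is used.

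Substituting into \eqref{eq:epsk1} and collecting quadratic pieces, the bracket in \eqref{eq:epsk1} is bounded below by a quadratic form in $\s_k,\s_{k+1}$ whose only data are the three coefficients $1/(2\sqrt{k-1})$, $1/(2\sqrt{k})$ and $\sqrt{k+1}/(2k)$. Explicitly, the coefficient of $\Vert\s_k\Vert^2$ is $\tfrac12\!\left(1/\sqrt{k-1}-1/\sqrt{k}\right)>0$ (favorable), that of $\Vert\s_{k+1}\Vert^2$ is $\tfrac12\!\left(\sqrt{k+1}/k-1/\sqrt{k-1}\right)<0$ (unfavorable), and there is in addition the nonnegative cross term $\tfrac{1}{2\sqrt{k-1}}\Vert\s_k-\s_{k+1}\Vert^2$. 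Because $\Gamma$ is a box the whole thing is separable and $\Vert\s_{k+1}\Vert^2\le m\eta^2$, so after multiplying by the prefactor $k/(k+1)$ the estimate reduces to controlling the scalar gap
\[
\frac{1}{\sqrt{k-1}}-\frac{\sqrt{k+1}}{k}=\frac{k-\sqrt{k^2-1}}{k\sqrt{k-1}}=\frac{1}{k\sqrt{k-1}\,(k+\sqrt{k^2-1})},
\]
times $m\eta^2$. Using $k+\sqrt{k^2-1}\ge 2k-1\ge\tfrac32 k$ for $k\ge 2$ already produces a bound of the advertised order $O(k^{-5/2})$.

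The delicate point, and the main obstacle, is the constant: the leading $k^{-1/2}$ terms of the two reciprocal square roots cancel, so the surviving quantity is only $O(k^{-5/2})$, and a careless estimate loses a constant factor. In particular, simply discarding the favorable term $\tfrac12\!\left(1/\sqrt{k-1}-1/\sqrt{k}\right)\Vert\s_k\Vert^2$ and using $\Vert\s_{k+1}\Vert^2\le m\eta^2$ overshoots the target by a factor approaching $3/2$; to land the stated $1/6$ one must retain the favorable quadratic term (equivalently, the strong-concavity cross term) to partially cancel the unfavorable contribution. Carried all the way, the cross term shows the relevant $2\times 2$ form is positive semidefinite, so in fact $\eps_{k+1}\ge 0$ and the claimed negative bound holds with room to spare. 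I would therefore either prove non-negativity directly by checking the form is PSD (an elementary but slightly fussy determinant inequality in $k$, tight only to order $k^{-2}$), or settle for the looser $O(k^{-5/2})$ estimate with constant $1/6$ if that is all the recursion \eqref{eq:recursion} needs downstream.
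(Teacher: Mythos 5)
Your decomposition of the bracket in \eqref{eq:epsk1} is exactly the paper's: an optimality-gap term (nonnegative because $\s_k$ maximizes $\s\mapsto\s^\T(\b-A\x_k)-\Vert\s\Vert^2/(2\sqrt{k-1})$ over $\Gamma$), a favorable coefficient on $\Vert\s_k\Vert^2$, and an unfavorable coefficient on $\Vert\s_{k+1}\Vert^2$; these are the paper's $\eps_{k+1}'$, $\eps_{k+1}''$, $\eps_{k+1}'''$. The difference is that the paper discards both nonnegative pieces and bounds the unfavorable one alone, asserting $\frac{\sqrt{k+1}}{2k}-\frac{1}{2\sqrt{k-1}}\ge-\frac{1}{6k^2\sqrt{k-1}}$ on the strength of the inequality $(k-1/(3k))^2\le k^2-1$. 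Your warning about the constant is exactly on target: that inequality is false, since $(k-1/(3k))^2=k^2-\tfrac23+\tfrac{1}{9k^2}>k^2-1$, and because $k-\sqrt{k^2-1}=1/(k+\sqrt{k^2-1})>1/(2k)$ the unfavorable coefficient actually lies below $-\frac{1}{4k^2\sqrt{k-1}}$, so the discard-everything route can never reach the constant $1/6$ --- precisely the factor-$3/2$ overshoot you predict. Your strong-concavity refinement, which retains the surplus $\frac{1}{2\sqrt{k-1}}\Vert\s_k-\s_{k+1}\Vert^2$, genuinely repairs this: writing $c=\frac{1}{2\sqrt{k-1}}$, $a=c-\frac{1}{2\sqrt{k}}$, $b=c-\frac{\sqrt{k+1}}{2k}$, the bracket dominates the coordinatewise quadratic form with matrix $\left(\begin{smallmatrix}c+a&-c\\-c&c-b\end{smallmatrix}\right)$, whose determinant $c(a-b)-ab$ is positive for all $k\ge2$ because $c(a-b)=\frac{1}{4k\sqrt{k-1}(\sqrt{k}+\sqrt{k+1})}=\Theta(k^{-2})$ while $ab=\Theta(k^{-4})$ (the $k=2$ case checks numerically). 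Hence $\eps_{k+1}\ge0$, which is stronger than the stated lemma. If you prefer to skip the determinant check, your looser estimate $\eps_{k+1}\ge-\frac{m\eta^2}{3k^2\sqrt{k-1}}$ is honest and still suffices for \eqref{eq:recursion2} after enlarging $\bar{D}$ to $D+m\eta^2/3$. Either way, your argument establishes the lemma, whereas the paper's, as written, does not.
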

\begin{proof}
  By adding and subtracting
  multiples of 
  $\Vert \s_{k+1}\Vert^2$ and
  and $\Vert \s_{k}\Vert^2$ inside \eqref{eq:epsk1}, we obtain
  \[
  \eps_{k+1}=\frac{k}{k+1}(\eps_{k+1}'+\eps_{k+1}''+\eps_{k+1}'''),
  \]
  where
  \begin{align*}
  \eps_{k+1}'&:=
  -\s_{k+1}^T(\b-A\x_k)+\frac{1}{2\sqrt{k-1}}\Vert\s_{k+1}\Vert^2
  +\s_k^T(\b-A\x_k)-\frac{1}{2\sqrt{k-1}}\Vert\s_k\Vert^2, \\
  \eps_{k+1}'' &:= \left(\frac{1}{2\sqrt{k-1}}-\frac{1}{2\sqrt{k}}\right)\Vert \s_k\Vert^2, \\
  \eps_{k+1}''' & :=
  \left(\frac{\sqrt{k+1}}{2k}-\frac{1}{2\sqrt{k-1}}\right)\Vert\s_{k+1}\Vert^2.
  \end{align*}
  We see that $\eps_{k+1}'\ge 0$ because $\s_k$ is the maximizer
  of $\s^T(\b-A\x_k)-\Vert\s\Vert^2/(2\sqrt{k-1})$ according to the
  definition \eqref{eq:sk1}, while $\s_{k+1}$ is some other feasible point,
  and $\eps_{k+1}'$ is the difference between the two objective values.
  We also see that $\eps_{k+1}''\ge 0$.

  As for $\eps_{k+1}'''$, use the estimate
  $\Vert\s_k\Vert\le \sqrt{m}\eta$ according to \eqref{eq:sk1}.
  Finally, by taking a common
  denominator and observing that $(k-1/(3k))^2\le k^2-1$,
  we obtain a lower bound of $-1/(6k^2\sqrt{k-1})$ on the parenthesized factor.
  Thus, adding the three contributions,
  \begin{equation} \label{eq:epsk1_bound}
    \eps_{k+1}\ge -\frac{m\eta^2}{6k^2\sqrt{k-1}} \cdot \frac{k}{k+1} \geq -\frac{m\eta^2}{6k^2\sqrt{k-1}}.
  \end{equation}
\end{proof}

\begin{lemma} \label{lem:deltak1_bound}
  Recall $\delta_{k+1}$ defined in \eqref{eq:deltak1}. We have the bound
  \[
    \delta_{k+1}\ge -D/k^{3/2},
  \]
  where $D$ is a positive constant depending on the data defined in \eqref{eq:Ddefn} below.
\end{lemma}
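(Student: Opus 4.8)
The plan is to recognize each $\r_{j+1}$ as the unique minimizer over $\Delta$ of the strongly convex quadratic $\phi_j(\r):=(\c-A^T\y_j)^T\r+\frac{1}{2\sqrt j}\Vert\r\Vert^2$, so that by \eqref{eq:rk1} we have $\r_{k+1}=\argmin_{\r\in\Delta}\phi_k(\r)$ and $\r_{k+2}=\argmin_{\r\in\Delta}\phi_{k+1}(\r)$. Collecting in \eqref{eq:deltak1} the terms sharing the linear factor $\c-A^T\y_{k+1}$, I would first rewrite
\[
\delta_{k+1}=\big(\phi_{k+1}(\r_{k+2})-\phi_{k+1}(\r_{k+1})\big)+\frac{\sqrt{k+1}-\sqrt k}{2(k+1)}\Vert\r_{k+1}\Vert^2 .
\]
The last term is nonnegative and may be discarded, so the lemma reduces to the upper bound $\phi_{k+1}(\r_{k+1})-\phi_{k+1}(\r_{k+2})\le D/k^{3/2}$; that is, I must show that $\r_{k+1}$, the minimizer of $\phi_k$, is nearly optimal for the neighbouring objective $\phi_{k+1}$. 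This is precisely a sensitivity estimate for the argmin under a perturbation of the objective, and it is the crux of the proof. Note the asymmetry with Lemma~\ref{lem:epsk1_bound}: there the point being subtracted ($\s_k$) was the \emph{maximizer} of the relevant objective, so a single optimality inequality produced a nonnegative term; here the point being subtracted ($\r_{k+2}$) is the \emph{minimizer}, so the naive optimality inequality has the wrong sign and a genuine stability argument is needed.

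The engine is the pair of first-order (variational) inequalities. Since $\r_{k+1}$ minimizes $\phi_k$ over $\Delta$ and $\r_{k+2}\in\Delta$, we have $\nabla\phi_k(\r_{k+1})^T(\r_{k+2}-\r_{k+1})\ge 0$, and symmetrically $\nabla\phi_{k+1}(\r_{k+2})^T(\r_{k+1}-\r_{k+2})\ge 0$. Writing $\u:=\r_{k+1}-\r_{k+2}$ and using $\nabla\phi_j(\r)=(\c-A^T\y_j)+\r/\sqrt j$, adding these two inequalities gives $[\nabla\phi_{k+1}(\r_{k+2})-\nabla\phi_k(\r_{k+1})]^T\u\ge 0$, where the gradient difference equals $A^T(\y_k-\y_{k+1})+\r_{k+2}/\sqrt{k+1}-\r_{k+1}/\sqrt k$. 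Rearranging, where the part $\r_{k+2}/\sqrt{k+1}-\r_{k+1}/\sqrt k$ contributes a $-\frac{1}{\sqrt{k+1}}\Vert\u\Vert^2$, this inequality yields
\[
\tfrac{1}{\sqrt{k+1}}\Vert\u\Vert^2\le \Vert A^T(\y_k-\y_{k+1})\Vert\,\Vert\u\Vert+\big|\tfrac{1}{\sqrt k}-\tfrac{1}{\sqrt{k+1}}\big|\,\Vert\r_{k+1}\Vert\,\Vert\u\Vert .
\]
Here I would invoke the a priori bounds $\Vert\r_{k+1}\Vert\le\xi$ (as $\r_{k+1}\in\Delta$), $\big|\tfrac{1}{\sqrt k}-\tfrac{1}{\sqrt{k+1}}\big|=\bigO(k^{-3/2})$, and $\Vert\y_{k+1}-\y_k\Vert=\tfrac{1}{k+1}\Vert\s_{k+1}-\y_k\Vert=\bigO(1/k)$ — the latter because $\y_k$ is an average of $\y_0$ and points of $\Gamma$, hence bounded by a data-dependent constant. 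Dividing through then gives $\Vert\u\Vert=\bigO(1/\sqrt k)$.

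Finally, since $\phi_{k+1}$ is exactly quadratic, $\phi_{k+1}(\r_{k+1})-\phi_{k+1}(\r_{k+2})=\nabla\phi_{k+1}(\r_{k+2})^T\u+\frac{1}{2\sqrt{k+1}}\Vert\u\Vert^2$, and the second term is already $\bigO(k^{-3/2})$ given $\Vert\u\Vert=\bigO(1/\sqrt k)$. The main obstacle is the cross term $\nabla\phi_{k+1}(\r_{k+2})^T\u$: bounding it crudely by $\Vert\nabla\phi_{k+1}(\r_{k+2})\Vert\,\Vert\u\Vert$ yields only $\bigO(1/\sqrt k)$, which is too weak. To sharpen it I would reuse the second variational inequality: since $\nabla\phi_k(\r_{k+1})^T\u\le 0$, I can write $\nabla\phi_{k+1}(\r_{k+2})^T\u\le[\nabla\phi_{k+1}(\r_{k+2})-\nabla\phi_k(\r_{k+1})]^T\u$, and bound the right-hand side by the same two products as in the displayed inequality (dropping the nonpositive $-\frac{1}{\sqrt{k+1}}\Vert\u\Vert^2$), namely an $\bigO(1/k)$ gradient-difference times $\Vert\u\Vert=\bigO(1/\sqrt k)$, giving $\bigO(k^{-3/2})$. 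Collecting the constants $\Vert A\Vert$, $\xi$, and the bound on $\Vert\s_{k+1}-\y_k\Vert$ (involving $m$, $\eta$, and $\y_0$) into a single $D$, defined in \eqref{eq:Ddefn}, then produces $\delta_{k+1}\ge\phi_{k+1}(\r_{k+2})-\phi_{k+1}(\r_{k+1})\ge -D/k^{3/2}$, as claimed.
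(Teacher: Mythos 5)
Your proof is correct, and it reaches the same two essential estimates as the paper ($\Vert\r_{k+2}-\r_{k+1}\Vert=\bigO(1/\sqrt{k})$ and $\Vert\y_{k+1}-\y_k\Vert=\bigO(1/k)$, whose product supplies the $k^{-3/2}$ rate), but it gets there by a genuinely different decomposition. The paper splits $\delta_{k+1}$ into four pieces anchored at the objective $\phi_k(\r)=(\c-A^T\y_k)^T\r+\Vert\r\Vert^2/(2\sqrt{k})$: the main piece $\phi_k(\r_{k+2})-\phi_k(\r_{k+1})$ is nonnegative by a \emph{single} optimality inequality (since $\r_{k+1}$ minimizes $\phi_k$), the price being an explicit cross term $(\r_{k+2}-\r_{k+1})^TA^T(\y_k-\y_{k+1})$, which is then controlled by writing $\r_{k+1},\r_{k+2}$ as projections \eqref{eq:rkproj} and invoking nonexpansiveness of $\proj_\Delta$; two further pieces absorb the coefficient mismatches. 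You instead anchor at $\phi_{k+1}$, where the subtracted point $\r_{k+2}$ is the minimizer, so the main term has the ``wrong sign'' and you must prove a genuine sensitivity estimate; you do this with the two variational inequalities (equivalently, strong monotonicity of $\nabla\phi$), which is essentially the proof of the nonexpansiveness fact the paper cites as a black box, and you correctly notice that the crude Cauchy--Schwarz bound on $\nabla\phi_{k+1}(\r_{k+2})^T\u$ is too weak and must be sharpened by reusing the first variational inequality. Your route is self-contained and makes the mechanism (stability of a regularized argmin under an $\bigO(1/k)$ perturbation of the linear term) more transparent, while the paper's four-way split is shorter because the projection identity packages the monotonicity argument; the only cosmetic discrepancy is that the constant you accumulate is not numerically the $D$ of \eqref{eq:Ddefn}, which is immaterial to the statement.
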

\begin{proof}
  Split $\delta_{k+1}$ into four terms:
  $$\delta_{k+1}=\delta_{k+1}' + \delta_{k+1}'' + \delta_{k+1}''' + \delta_{k+1}^{\mathrm{iv}}$$
  where
  \begin{align*}
    \delta_{k+1}'  &:=
  \r_{k+2}^T(\c-A^T\y_{k})+\frac{1}{2\sqrt{k}}\Vert\r_{k+2}\Vert^2
    -\r_{k+1}^T(\c-A^T\y_{k})-\frac{1}{2\sqrt{k}}\Vert \r_{k+1}\Vert^2,
    \\
    \delta_{k+1}''&:= (\r_{k+2}-\r_{k+1})^TA^T(\y_k-\y_{k+1}),
    \\
    \delta_{k+1}'''&:=\left(\frac{1}{2\sqrt{k+1}}-\frac{1}{2\sqrt{k}}\right)
    \Vert \r_{k+2}\Vert^2,
    \\
    \delta_{k+1}^{\mathrm{iv}} &:=\left(\frac{1}{2\sqrt{k}} - \frac{k}{2(k+1)\sqrt{k}}\right)
    \Vert \r_{k+1}\Vert^2.
  \end{align*}

  We observe that $\delta_{k+1}'\ge 0$ because $\r_{k+1}$ is the
  minimizer of $\r^T(\c-A^T\y_k)+\Vert\r\Vert^2/(2\sqrt{k})$ according to
  \eqref{eq:rk1}, whereas $\r_{k+2}$ is some other feasible point.

  Next, we turn to $\delta_{k+1}''$, a product of three factors.
  Starting on the first factor,
  \begin{align}
  \Vert \r_{k+2}-\r_{k+1}\Vert &= 
  \Vert \proj_{\Delta}(\sqrt{k+1}(A^T\y_{k+1}-\c)) - 
  \proj_{\Delta}(\sqrt{k}(A^T\y_k-\c)\Vert
  \notag \\
  &\le
  \Vert \sqrt{k+1}(A^T\y_{k+1}-\c)) - 
  \sqrt{k}(A^T\y_k-\c)\Vert \label{eq:lipschitz}\\
  &=
  \Vert (\sqrt{k+1}-\sqrt{k})(A^T\y_{k+1}-\c)+
  \sqrt{k}(A^T\y_{k+1}-\c-(A^T\y_k-\c))\Vert \notag \\
  &=
  \Vert (\sqrt{k+1}-\sqrt{k})(A^T\y_{k+1}-\c)+
  \sqrt{k}A^T(\y_{k+1}-\y_k)\Vert \notag \\
  &\le
  |\sqrt{k+1}-\sqrt{k}|\cdot \Vert A^T\y_{k+1}-\c\Vert +
  \sqrt{k}\Vert A\Vert\cdot \Vert \y_{k+1}-\y_k\Vert. \notag
  \end{align}
  Here, the first line follows from \eqref{eq:rkproj},
  the second (that is, \eqref{eq:lipschitz})
  from the fact that the Lipschitz constant of
  $\proj_C(\cdot)$ is 1 for any closed nonempty convex set $C$,
  the third line adds and subtracts the same term, and the
  last line applies the triangle inequality and
  submultiplicativity.

  Now we use the facts
  that $\sqrt{k+1}-\sqrt{k}\le 1/\sqrt{k}$,
  and, from \eqref{eq:yk1},
  \[
  \y_{k+1}-\y_{k}=\frac{1}{k+1}(\s_{k+1}-\y_k),
  \]
  and finally, the bounds $\Vert\y_k\Vert\le \sqrt{m}\eta$,
  $\Vert\s_k\Vert \le \sqrt{m}\eta$ 
  to conclude that 
   $\Vert \y_{k+1}-\y_{k}\Vert\le 2\sqrt{m}\eta/(k+1)$ and thus
  \[
  \Vert \r_{k+2}-\r_{k+1}\Vert \le (1/\sqrt{k})\cdot (\Vert A\Vert\sqrt{m}\eta+\Vert\c\Vert)
  +(2/\sqrt{k})\Vert A\Vert \sqrt{m}\eta.
  \]

  This takes care of the first factor in $\delta_{k+1}''$.  The middle
  factor is bound by $\Vert A\Vert$, and the third factor $\Vert \y_{k+1}-\y_{k}\Vert$
  is bounded by
  (see the previous paragraph) $2\sqrt{m}\eta/(k+1)$.  Thus, overall,
  we obtain
  \begin{align*}
    \delta_{k+1}'' &\ge -\Vert \r_{k+2}-\r_{k+1}\Vert\cdot \Vert A\Vert
    \cdot \Vert\y_k-\y_{k+1}\Vert \\
    &\ge -C/k^{3/2},
  \end{align*}
  where $C$ depends on the problem data:
  \[
  C\le 2\Vert A\Vert \sqrt{m}\eta(3\Vert A \Vert \sqrt{m}\eta + \Vert\c\Vert).
  \]

  For $\delta_{k+1}'''$, by finding a common denominator and then multiplying the
  resulting fraction by $\sqrt{k}+\sqrt{k+1}$, we obtain
  \[
  \delta_{k+1}'''\ge -1/(4k^{3/2})\cdot \Vert \r_{k+2}\Vert^2
  \ge -1/(4k^{3/2})\cdot \xi^2.
  \]
  Finally, one sees that $\delta_{k+1}^{\mathrm{iv}}\ge 0$.  Putting all
  of these terms together yields:
  \begin{equation} \label{eq:deltak1_bound}
    \delta_{k+1}\ge -D/k^{3/2},
  \end{equation}
  where
  \begin{equation} \label{eq:Ddefn}
    D :=  2\Vert A\Vert \sqrt{m}\eta(3\Vert A \Vert \sqrt{m}\eta + \Vert\c\Vert)+\frac{\xi^2}{4}.
  \end{equation}
\end{proof}

  Combining \eqref{eq:recursion}, Lemma~\ref{lem:epsk1_bound}, and Lemma~\ref{lem:deltak1_bound}, we now have for all $k \geq 2$
  \[
    U_{k+1} \leq \frac{k}{k+1}U_k + \frac{D}{k^{3/2}} + \frac{m\eta^2}{6k^2\sqrt{k-1}}.
  \]
  In fact, since $\sqrt{k} \geq 1/\sqrt{k-1}$ for all $k \geq 2$, we can enlarge $D$ to obtain the bound
  \begin{equation} \label{eq:recursion2}
    U_{k+1} \leq \frac{k}{k+1}U_k + \frac{\bar{D}}{k^{3/2}},
  \end{equation}
  where
  \[
    \bar{D} := D + \frac{m\eta^2}{6}.
  \]
  With this bound, we prove the following bound on $U_k$.

\begin{theorem} \label{thm:Ukdec}
  Recall $U_k$ defined in \eqref{eq:udef}. We have the bound
  \[
    U_{k+1} \leq \frac{F}{\sqrt{k}},
  \]
  for all $k\geq 2$, where $F := \max\{\sqrt{2}U_2, 6\bar{D}\}$.
\end{theorem}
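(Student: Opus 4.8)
The plan is to prove the bound by induction on $k$, using the clean recursion \eqref{eq:recursion2} as the engine; all the genuinely analytic work in the lemmas is already done, so what remains is a self-contained scalar argument. The induction hypothesis is the statement itself, $P(k):\ U_{k+1}\le F/\sqrt{k}$, and the two facts I will lean on throughout are the two halves of the definition $F=\max\{\sqrt{2}U_2,\,6\bar{D}\}$, namely $U_2\le F/\sqrt{2}$ and $\bar{D}\le F/6$.

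For the base case $k=2$, I would simply instantiate \eqref{eq:recursion2} at $k=2$, giving $U_3\le \tfrac{2}{3}U_2+\bar{D}/2^{3/2}$, and then substitute the two bounds above:
\[
U_3\le \frac{2}{3}\cdot\frac{F}{\sqrt{2}}+\frac{1}{2\sqrt{2}}\cdot\frac{F}{6}
=\frac{F}{\sqrt{2}}\left(\frac{2}{3}+\frac{1}{12}\right)=\frac{3}{4}\cdot\frac{F}{\sqrt{2}}\le\frac{F}{\sqrt{2}},
\]
which is exactly $P(2)$. The slack here (a factor of $3/4$) is reassuring and suggests the inductive step will also hold with room to spare.

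For the inductive step, assume $P(k)$ and apply \eqref{eq:recursion2} at index $k+1$, then insert the hypothesis $U_{k+1}\le F/\sqrt{k}$:
\[
U_{k+2}\le\frac{k+1}{k+2}U_{k+1}+\frac{\bar{D}}{(k+1)^{3/2}}
\le\frac{k+1}{k+2}\cdot\frac{F}{\sqrt{k}}+\frac{\bar{D}}{(k+1)^{3/2}}.
\]
To close the induction I need this to be at most $F/\sqrt{k+1}$. After moving terms and using $\bar{D}\le F/6$ to replace $\bar{D}$, the whole step reduces to the single scalar inequality
\[
\frac{1}{\sqrt{k+1}}-\frac{k+1}{(k+2)\sqrt{k}}\ \ge\ \frac{1}{6(k+1)^{3/2}},\qquad k\ge 2.
\]
This scalar inequality is the main obstacle: the left-hand side is a small difference of two nearly-equal quantities, so its positivity (let alone the factor $1/6$) is not visually obvious and a careless bound will lose the inequality. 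The way I would handle it is to rewrite the left-hand side exactly as
\[
\frac{1}{(k+2)\sqrt{k}}-\frac{1}{(\sqrt{k}+\sqrt{k+1})\sqrt{k(k+1)}},
\]
which isolates the dominant $\tfrac{1}{2}k^{-3/2}$ behaviour; bounding the second fraction above using $\sqrt{k}+\sqrt{k+1}\ge 2\sqrt{k}$ and $\sqrt{k(k+1)}\ge k$ yields the clean lower bound $k^{-3/2}\bigl(\tfrac12-\tfrac{2}{k+2}\bigr)$, which exceeds $\tfrac14 k^{-3/2}\ge \tfrac16(k+1)^{-3/2}$ for $k\ge 6$. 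The finitely many remaining cases $k=2,3,4,5$ are then checked by direct numerical evaluation, completing the verification.

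I would close by noting that the main difficulty is purely this elementary-but-delicate scalar estimate; everything else is bookkeeping. As an independent sanity check on the order of magnitude (and on the constant), one can observe that setting $V_k:=kU_k$ turns \eqref{eq:recursion2} into $V_{k+1}\le V_k+\bar{D}(k+1)k^{-3/2}$, so $V_{k+1}\le V_2+\bar{D}\sum_{j=2}^{k}(j+1)j^{-3/2}=\bigO(\bar{D}\sqrt{k})$ and hence $U_{k+1}=\bigO(\bar{D}/\sqrt{k})$, consistent with the $F/\sqrt{k}$ claim; this telescoping view confirms the $1/\sqrt{k}$ rate but gives a less explicit constant than the induction, which is why I prefer the inductive argument for the stated sharp form.
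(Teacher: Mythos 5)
Your proof is correct and follows essentially the same route as the paper: an induction driven by \eqref{eq:recursion2} together with the two halves of the definition of $F$, namely $U_2\le F/\sqrt{2}$ and $\bar{D}\le F/6$. The only differences are cosmetic --- the paper inducts on the slightly stronger statement $U_k\le F/\sqrt{k}$ and closes the inductive step with the uniform estimate $\sqrt{k+1}\ge\sqrt{k}+1/(3\sqrt{k})$ (so no finite case check is needed), whereas you induct on the literal statement $U_{k+1}\le F/\sqrt{k}$ and verify the resulting shifted scalar inequality by an exact decomposition for $k\ge 6$ plus direct evaluation at $k=2,\dots,5$.
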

\begin{proof}  The $k=2$ case follows immediately from the definition of $F$.
Suppose inductively
that $U_k\le F/\sqrt{k}$.  Then we check:
\begin{align*}
  U_{k+1}&\le \frac{k}{k+1}U_k + \bar{D}/k^{3/2} \\
  &\le \frac{kF}{(k+1)\sqrt{k}} + \bar{D}/k^{3/2} \\
  &=\frac{k^2F+\bar{D}(k+1)}{(k+1)k^{3/2}} \\
  &=\frac{1}{\sqrt{k+1}}\cdot\frac{k^2F+\bar{D}(k+1)}{\sqrt{k+1}\cdot k^{3/2}} \\
  &\le\frac{1}{\sqrt{k+1}}\cdot\frac{k^2F+\bar{D}(k+1)}{(\sqrt{k}+1/(3\sqrt{k}))k^{3/2}} \\
  &=\frac{1}{\sqrt{k+1}}\cdot\frac{k^2F+\bar{D}(k+1)}{k^2+k/3} \\
  &\le\frac{1}{\sqrt{k+1}}\cdot\frac{k^2F+kF/3}{k^2+k/3} \\
  &=\frac{1}{\sqrt{k+1}}\cdot F.
\end{align*}
Here, we used \eqref{eq:recursion2} for the
first line, the induction hypothesis for the
second line, the inequality
$\sqrt{k+1}\ge\sqrt{k}+1/(3\sqrt{k})$ for $k\ge 1$ on the 5th line (easy
to confirm by squaring)
and the assumption $\bar{D}\le F/6$ on the 7th line, which implies for all $k\ge 1$
that $\bar{D}(k+1)\le Fk/3$.

Thus, $U_k\le F/\sqrt{k}$.
\end{proof}

The perturbation terms from \eqref{eq:rk1} and \eqref{eq:sk1} in FWLP-P
were essential to this analysis. Without them, as in FWLP, there is no useful bound
on how much the primal step $\r_{k+2}$ can differ from $\r_{k+1}$ when the index of the most-violated
constraint changes from one iteration to the next. With the perturbations, we are able to obtain the inequality
\eqref{eq:lipschitz}.

\subsection{Proving convergence of the potential function implies convergence of the iterates}
We have shown that the potential function $U_k$ decreases at a rate of $\bigO(1/\sqrt{k})$ as $k$ increases.
It remains to show that the potential function bounds the distance of the iterates from an optimizer.
We first need the following lemmas.  Both lemmas have the same flavor: if an infeasible point for an LP improves on the optimal objective value, then the amount of objective improvement is bounded in terms of the amount of infeasibility.

\begin{lemma} \label{lem:primal-bound}
  Let $p^*$ denote the optimal value of \eqref{eq:std-lp}.
  Assume $p^*$ is finite, i.e., \eqref{eq:std-lp} is feasible and bounded.
  Let $(\x^*,\y^*)$ be an arbitrary optimizing primal-dual pair of \eqref{eq:std-lp} and \eqref{eq:std-dual}.
  Then for an arbitrary $\hat\x\ge\bz$,
  \begin{equation}
  \c^T\hat\x\ge p^*-\Vert\y^*\Vert_\infty\Vert\b-A\hat\x\Vert_1.
  \label{eq:primst}
  \end{equation}
  \label{lem:primdist}
\end{lemma}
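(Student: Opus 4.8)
The plan is to derive \eqref{eq:primst} directly from the duality relationship between \eqref{eq:std-lp} and \eqref{eq:std-dual}, together with a single application of H\"older's inequality. The only structural facts I need are that the given $\y^*$ is feasible for the dual \eqref{eq:std-dual}, i.e.\ $A^T\y^*\le\c$, and that strong duality holds so that $p^*=\b^T\y^*$; both are guaranteed by the hypothesis that $(\x^*,\y^*)$ is an optimizing primal-dual pair with $p^*$ finite.

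First I would exploit the sign constraint on $\hat\x$ rather than any feasibility. Rewriting dual feasibility as $\c-A^T\y^*\ge\bz$ and pairing it with $\hat\x\ge\bz$, the inner product of two nonnegative vectors is nonnegative, so $(\c-A^T\y^*)^T\hat\x\ge 0$, equivalently $\c^T\hat\x\ge(A^T\y^*)^T\hat\x=(\y^*)^T(A\hat\x)$. Next I would reintroduce $\b$ by writing $A\hat\x=\b-(\b-A\hat\x)$, giving $\c^T\hat\x\ge(\y^*)^T\b-(\y^*)^T(\b-A\hat\x)$. Using strong duality to replace $(\y^*)^T\b$ by $p^*$, this becomes $\c^T\hat\x\ge p^*-(\y^*)^T(\b-A\hat\x)$.

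The final step bounds the residual term: by H\"older's inequality, $(\y^*)^T(\b-A\hat\x)\le|(\y^*)^T(\b-A\hat\x)|\le\Vert\y^*\Vert_\infty\Vert\b-A\hat\x\Vert_1$, and substituting this into the previous display yields exactly \eqref{eq:primst}.

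There is no serious obstacle here; the argument is a short chain of inequalities. The one point requiring care is the direction of the dual inequality: I must write dual feasibility so that $\c-A^T\y^*$ is a nonnegative vector and then pair it with the nonnegative $\hat\x$, since it is precisely the nonnegativity of $\hat\x$ (and not its feasibility, which is not assumed) that makes the first inequality valid. I would also note that strong duality, and hence the existence of the dual optimum $\y^*$ whose norm appears in the bound, is exactly what the finiteness assumption on $p^*$ provides.
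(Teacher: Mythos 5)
Your proof is correct, but it takes a genuinely different and more economical route than the paper's. The paper introduces an auxiliary perturbed LP with constraint $A\x=\b-\k$ where $\k=\b-A\hat\x$, argues that this LP is feasible (because $\hat\x$ satisfies it) and bounded (because a certificate of unboundedness would also certify unboundedness of the original LP), invokes the existence of a dual optimum $\hat\y$ of the perturbed problem, and then runs a chain of inequalities through $(\b-\k)^T\hat\y$ using weak duality between the perturbed pair. You bypass all of that machinery: you pair dual feasibility $\c-A^T\y^*\ge\bz$ directly with the nonnegativity $\hat\x\ge\bz$ to get $(\c-A^T\y^*)^T\hat\x\ge 0$, then substitute $A\hat\x=\b-(\b-A\hat\x)$, apply strong duality $p^*=\b^T\y^*$, and finish with H\"older. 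The only ingredients you need are the dual feasibility of the given $\y^*$ and strong duality, both supplied by the hypothesis that $(\x^*,\y^*)$ is an optimizing pair, so there is no gap. What the paper's construction buys is mainly a template that it reuses almost verbatim for the companion Lemma~\ref{lem:dual-bound} (where the perturbation sits in the inequality constraints rather than the right-hand side); your one-line pairing argument adapts to that lemma just as easily ($\b^T\hat\y=(\x^*)^TA^T\hat\y\le(\x^*)^T(\c+l\e)=d^*+l\Vert\x^*\Vert_1$), so your approach is arguably preferable in both places. You are also right to flag that the nonnegativity of $\hat\x$, not any feasibility, is what drives the first inequality --- that is exactly the point at which a careless argument would fail.
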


\begin{proof}
  Select an arbitrary optimizing pair $(\x^*,\y^*)$ for \eqref{eq:std-lp}.
  Let $\k:=\b-A\hat\x$.  Consider the LP,
  \begin{equation}
  \begin{array}{rl}
    \min_{\x} & \c^T\x \\
    \mbox{s.t.} & A\x=\b-\k,\\
    & \x\ge \bz.
  \end{array}
  \label{eq:sklp}
  \end{equation}
  This LP is clearly feasible since $\hat\x$ satisfies the constraints.
  It is also
  bounded, as argued by contradiction:
  If   \eqref{eq:sklp} were feasible and unbounded, then there would exist
  a certificate of unboundedness, which is a vector
  $\w$ such that $\w\ge\bz$, $A\w=\bz$, and $\c^T\w<0$.  However,
  such a $\w$ would also certify unboundedness of
  \eqref{eq:std-lp}, which we have already assumed to be bounded.

  Therefore, the dual of \eqref{eq:sklp}, which is,
  \begin{equation}
  \begin{array}{rl}
    \max_{\y} & (\b-\k)^T\y \\
    \mbox{s.t.} & A^T\y\le \c,
  \end{array}
  \label{eq:skdlp}
  \end{equation}
  has an optimal solution, say $\hat\y$.  Since $\y^*$ is also feasible
  for \eqref{eq:skdlp}, we have the following chain of inequalities
  \begin{align*}
    p^*&=\b^T\y^* && \mbox{(by strong duality of \eqref{eq:std-lp})} \\
        &=(\b-\k)^T\y^* +\k^T\y^* \\
        &\le (\b-\k)^T\y^* +\Vert \k\Vert_1\cdot \Vert \y^*\Vert_\infty \\
        &\le (\b-\k)^T\hat\y +\Vert \k\Vert_1\cdot \Vert \y^*\Vert_\infty
        &&\mbox{(since $\hat\y$ maximizes \eqref{eq:skdlp})} \\
        &\le \c^T\hat\x + \Vert\k\Vert_1\cdot\Vert\y^*\Vert_\infty.
        &&\mbox{(by weak duality between \eqref{eq:sklp} and \eqref{eq:skdlp})}
  \end{align*}
  Recalling $\k=\b-A\hat\x$, the final line in this chain
  establishes \eqref{eq:primst}.
\end{proof}

\begin{lemma} \label{lem:dual-bound}
  Let $d^*$ denote the optimal value of \eqref{eq:std-dual}.
  Assume $d^*$ is finite, i.e., \eqref{eq:std-dual} is feasible and bounded.
  Let $(\x^*,\y^*)$ be an arbitrary optimizing primal-dual pair of \eqref{eq:std-lp} and \eqref{eq:std-dual}.
  Then for an arbitrary $\hat\y$,
  \begin{equation}
  \b^T\hat\y\le d^*+\Vert\x^*\Vert_1\cdot l,
  \label{eq:dumst}
  \end{equation}
  where $l$ measures the infeasibility of $\hat\y$, that is,
  \begin{equation}
  l:=\max(0,\max_j\{\e_j^T(A^T\hat\y-\c)\}).
  \label{eq:ldef}
  \end{equation}
  \label{lem:dualdist}
\end{lemma}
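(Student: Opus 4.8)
The plan is to mirror the proof of Lemma~\ref{lem:primal-bound}, exploiting the primal-dual symmetry: where that argument perturbed the right-hand side $\b$ so that $\hat\x$ became feasible, here I would relax the dual constraint $A^\T\y\le\c$ just enough that $\hat\y$ becomes feasible. Concretely, the definition \eqref{eq:ldef} guarantees $\e_j^\T(A^\T\hat\y-\c)\le l$ for every $j$, so that $A^\T\hat\y\le\c+l\e$, and hence $\hat\y$ is feasible for the relaxed dual LP $\max\{\b^\T\y:A^\T\y\le\c+l\e\}$.

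The key observation is that this relaxed dual is precisely the dual of the perturbed primal $\min\{(\c+l\e)^\T\x:A\x=\b,\ \x\ge\bz\}$, which shares its feasible region with \eqref{eq:std-lp} and therefore admits $\x^*$ as a feasible point. I would then invoke weak duality between this perturbed primal-dual pair: since $\hat\y$ is feasible for the relaxed dual and $\x^*$ is feasible for the perturbed primal, we get $\b^\T\hat\y\le(\c+l\e)^\T\x^*$. Expanding the right-hand side as $(\c+l\e)^\T\x^*=\c^\T\x^*+l\,\e^\T\x^*$, and using $\e^\T\x^*=\Vert\x^*\Vert_1$ (valid because $\x^*\ge\bz$) together with $\c^\T\x^*=p^*=d^*$ (strong duality of the original LP, guaranteed by the assumed optimizing pair) yields exactly \eqref{eq:dumst}.

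I do not anticipate a serious obstacle, as the argument needs only weak duality rather than the boundedness-by-contradiction step used in Lemma~\ref{lem:primal-bound}. The only points requiring care are confirming the direction of the inequality from the definition of $l$ (namely that $l$ upper-bounds each constraint violation, so $\hat\y$ is indeed feasible for the relaxed problem) and noting that the boundary case $l=0$, where $\hat\y$ is already dual-feasible, is subsumed by the same chain. As a cross-check I would bypass the perturbed LP entirely: writing $\b^\T\hat\y=(\x^*)^\T A^\T\hat\y=(\x^*)^\T(A^\T\hat\y-\c)+\c^\T\x^*$ and bounding the first term componentwise by $l\sum_j x^*_j=l\Vert\x^*\Vert_1$ gives the same inequality directly, which also confirms that the constant multiplying $l$ is indeed $\Vert\x^*\Vert_1$.
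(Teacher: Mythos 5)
Your proof is correct, and it uses the same key construction as the paper --- relaxing the dual constraints to $A^\T\y\le\c+l\e$ so that $\hat\y$ becomes feasible for \eqref{eq:dldu} --- but it closes the argument by a genuinely shorter route. The paper first argues that \eqref{eq:dldu} is bounded (via a certificate-of-unboundedness contradiction), so that its dual \eqref{eq:pldu} has an optimizer $\hat\x$, and then chains $d^*=\c^\T\x^*\ge(\c+l\e)^\T\x^*-l\Vert\x^*\Vert_1\ge(\c+l\e)^\T\hat\x-l\Vert\x^*\Vert_1\ge\b^\T\hat\y-l\Vert\x^*\Vert_1$, using the optimality of $\hat\x$ in the middle step. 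You instead apply weak duality directly to the feasible pair $(\x^*,\hat\y)$ of the perturbed primal-dual pair, getting $\b^\T\hat\y\le(\c+l\e)^\T\x^*=d^*+l\Vert\x^*\Vert_1$ in one line; this dispenses with both the boundedness argument and the existence of $\hat\x$, since weak duality between two feasible points needs neither. Your ``cross-check'' computation $\b^\T\hat\y=(\x^*)^\T A^\T\hat\y\le(\x^*)^\T(\c+l\e)$ is in fact just the proof of that weak duality instance written out, and is the most elementary form of the argument: it uses only $A\x^*=\b$, $\x^*\ge\bz$, and the definition of $l$. What the paper's longer detour buys is essentially nothing for this lemma; it mirrors the structure of Lemma~\ref{lem:primal-bound}, where the analogous shortcut is also available. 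Your identification of $\e^\T\x^*=\Vert\x^*\Vert_1$ via $\x^*\ge\bz$ and your handling of the $l=0$ case are both correct.
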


\begin{proof}
  This proof is analogous to the previous proof.  Let
  $(\x^*,\y^*)$ be an arbitrary primal-dual optimizer
  of \eqref{eq:std-dual}.  Consider the dual-form LP given by
  \begin{equation}
  \begin{array}{rl}
    \max_{\y} & \b^T\y \\
    \mbox{s.t.} & A^T\y\le \c+l\e,
  \end{array}
  \label{eq:dldu}
  \end{equation}
  where $l$ is from \eqref{eq:ldef} and $\e$ denotes
  the vector of all 1's.  This LP is clearly feasible since
  $\hat\y$ satisfies the constraint.  Furthermore, it is
  bounded as argued by contradiction.  If
  \eqref{eq:dldu} were feasible and unbounded, there would
  exist a certificate of unboundedness, that is, a vector
  $\z$ such that $A^T\z\le\bz$ and $\b^T\z>0$.  However, this
  certificate would also certify the unboundedness of
  \eqref{eq:std-dual}, which is assumed to be bounded.

  Therefore, the dual of \eqref{eq:dldu}, which is
  \begin{equation}
  \begin{array}{rl}
    \min_{\x} & (\c+l\e)^T\x \\
    \mbox{s.t.} & A\x=\b, \\
    & \x\ge\bz,
  \end{array}
  \label{eq:pldu}
  \end{equation}
  has an optimal solution which we denote $\hat\x$.
  Since $\x^*$ is also feasible for \eqref{eq:pldu},
  we have the following chain of inequalities:
  \begin{align*}
    d^* &= \c^T\x^* &&\mbox{(by strong duality of \eqref{eq:std-dual})} \\
    &=(\c+l\e)^T\x^* -     l\e^T\x^* \\
    &\ge (\c+l\e)^T\x^* - l\Vert\x^*\Vert_1 \\
    &\ge (\c+l\e)^T\hat\x - l\Vert\x^*\Vert_1 &&
    \mbox{(by the optimality of $\hat\x$ in \eqref{eq:pldu})} \\
    &\ge \b^T\hat\y - l\Vert\x^*\Vert_1 &&
    \mbox{(by weak duality betwen \eqref{eq:dldu} and \eqref{eq:pldu})}.
  \end{align*}
  The final line after rearrangement is \eqref{eq:dumst}.
\end{proof}

We are now ready to prove the penultimate theorem that shows convergence of FWLP-P. We show that, assuming a primal-dual optimal LP solution exists and $\xi,\eta$ have been chosen correctly, the $U_k$ plus terms that tend to 0 bound the distance from optimality. 
Note that an analog of Theorem~\ref{thm:boundedUk_optdist} can be derived for FWLP using a similar proof. This analog is not presented here since we are not able to show that $U_k\to 0$ for FWLP.

\begin{theorem} \label{thm:boundedUk_optdist}
  Suppose 
  $k\geq 2$. Then,
  \begin{align}
    &\x_k \ge \bz, \label{eq:op1} \\
    &\Vert\b-A\x_k\Vert_1 \le \frac{2U_k}{\eta} + \frac{\xi^2}{\eta\sqrt{k}} + \frac{\eta}{\sqrt{k-1}}, \label{eq:op2}\\
    &\max(0,\max_j\{\e_j^T(A^T\y_k-\c)\}) \le \frac{2U_k}{\xi} + \frac{\xi}{\sqrt{k}} + \frac{\eta^2}{\xi\sqrt{k-1}}, \label{eq:op3}\\
    &\c^T\x_k-\b^T\y_k\le U_k,\label{eq:op4}
    \end{align}
  provided
  \begin{equation}
  \xi \ge 2\Vert\x^*\Vert_1, \qquad \eta\ge 2\Vert\y^*\Vert_\infty.
  \label{eq:xietaassum}
  \end{equation}
\label{thm:polyopt}
\end{theorem}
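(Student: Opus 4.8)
The plan is to prove the four conclusions in order of increasing difficulty, exploiting the natural splitting of the potential from \eqref{eq:udef},
\[
U_k = \alpha_k + \beta_k + (\c^T\x_k - \b^T\y_k),
\]
where $\alpha_k := -\r_{k+1}^T(\c-A^T\y_k) - \frac{1}{2\sqrt{k}}\Vert\r_{k+1}\Vert^2$ and $\beta_k := \s_k^T(\b-A\x_k) - \frac{1}{2\sqrt{k}}\Vert\s_k\Vert^2$. Throughout I write $v^* := p^* = d^*$ for the common optimal value (strong duality). Conclusion \eqref{eq:op1} is immediate by induction on \eqref{eq:xk1}: each $\x_{k}$ is a convex combination of $\x_{k-1}$ and $\r_{k}\in\Delta\subseteq\R^n_+$, and $\x_0\ge\bz$, so nonnegativity propagates. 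For \eqref{eq:op4} I would show $\alpha_k\ge 0$ and $\beta_k\ge 0$. Since $\bz\in\Delta$ is feasible in \eqref{eq:rk1} with objective value $0$ and $\r_{k+1}$ is the minimizer, $\alpha_k\ge 0$; likewise $\bz\in\Gamma$ gives the analogous inequality for $\s_k$, and since $\sqrt{k}\ge\sqrt{k-1}$ the weight $\frac{1}{2\sqrt{k}}$ in $\beta_k$ is no larger than the weight $\frac{1}{2\sqrt{k-1}}$ that $\s_k$ actually maximizes against in \eqref{eq:sk1}, so $\beta_k\ge 0$. Then \eqref{eq:op4} is just $\c^T\x_k-\b^T\y_k = U_k-\alpha_k-\beta_k\le U_k$.

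The substance lies in \eqref{eq:op2}–\eqref{eq:op3}, which I would prove by a symmetric scheme; I describe the dual bound \eqref{eq:op3}, writing $L_k := \max(0,\max_j\e_j^T(A^T\y_k-\c))$. Three ingredients combine. (a) A lower bound on $\alpha_k$ from the extreme feasible point $\xi\e_{j^*}\in\Delta$, where $j^*$ attains $L_k$ (the case $L_k=0$ being trivial), giving $\alpha_k\ge \xi L_k-\frac{\xi^2}{2\sqrt{k}}$. (b) An upper bound on $\alpha_k$: writing $\alpha_k = U_k-\beta_k-(\c^T\x_k-\b^T\y_k)$ and lower-bounding $\beta_k$ by its value at the comparison point $\s=\y^*$, which lies in $\Gamma$ by \eqref{eq:xietaassum}; using $A^T\y^*\le\c$, $\x_k\ge\bz$ and $\b^T\y^*=v^*$, the term $\c^T\x_k$ cancels, leaving $\alpha_k\le U_k-v^*+\b^T\y_k+\frac{\Vert\y^*\Vert^2}{2\sqrt{k-1}}$. (c) Lemma~\ref{lem:dual-bound} applied to $\hat\y=\y_k$, giving $\b^T\y_k\le v^*+\Vert\x^*\Vert_1 L_k$. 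Chaining (a)–(c) yields $\xi L_k\le U_k+\Vert\x^*\Vert_1 L_k+\frac{\Vert\y^*\Vert^2}{2\sqrt{k-1}}+\frac{\xi^2}{2\sqrt{k}}$, and the slack $\Vert\x^*\Vert_1\le\xi/2$ from \eqref{eq:xietaassum} lets me move $\frac{\xi}{2}L_k$ to the left-hand side, producing the factor $2$ on $U_k$ and the claimed estimate after bounding $\Vert\y^*\Vert^2$ by its $\eta$-controlled value. For \eqref{eq:op2} the mirror argument uses the extreme point $\eta\,\sgn(\b-A\x_k)\in\Gamma$, the comparison point $\x^*\in\Delta$ (now the $\b^T\y_k$ term cancels), and Lemma~\ref{lem:primal-bound}, with the slack $\Vert\y^*\Vert_\infty\le\eta/2$ performing the absorption.

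The main obstacle is orchestrating the cancellations so that the primal and dual infeasibility bounds decouple. The naive estimate of $\c^T\x_k-\b^T\y_k$ from below via the two distance lemmas couples $L_k$ and $\Vert\b-A\x_k\Vert_1$ together, which would leave cross terms absent from the stated bounds. The device that avoids this is selecting the optimal dual point $\y^*$ as the comparison vector inside $\beta_k$ (and $\x^*$ inside $\alpha_k$ for the primal case): this is precisely what makes the shared objective-value term cancel, so that exactly one infeasibility measure survives and is then controlled by the single matching distance lemma. A second delicate point is that the factor-two slack in \eqref{eq:xietaassum} is exactly what is needed to reabsorb $\Vert\x^*\Vert_1 L_k\le\frac{\xi}{2}L_k$ (respectively $\Vert\y^*\Vert_\infty\Vert\b-A\x_k\Vert_1\le\frac{\eta}{2}\Vert\b-A\x_k\Vert_1$) into the left-hand side, and this is the origin of the $2U_k$ coefficient; without it the absorption is impossible and the bound degrades. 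Finally, one must track carefully the mismatch between the $1/(2\sqrt{k})$ weight appearing in $U_k$ and the $1/(2\sqrt{k-1})$ weight defining $\s_k$, which is handled throughout by monotonicity of $t\mapsto 1/\sqrt{t}$, exactly as in the proof of Lemma~\ref{lem:epsk1_bound}.
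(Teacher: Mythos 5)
Your proposal is correct, and for \eqref{eq:op2}--\eqref{eq:op3} it takes a somewhat different route than the paper. Parts \eqref{eq:op1} and \eqref{eq:op4} coincide with the paper's argument (nonnegativity propagates through the convex combinations, and the comparison points $\r=\bz$, $\s=\bz$ show the two regularized terms in \eqref{eq:udef} are nonnegative). For the infeasibility bounds, the paper lower-bounds all of $U_k$ in a single chain: it uses the same extreme comparison points $\s=\eta\sgn(\b-A\x_k)$ and $\r=\xi\e_j$ that you use, but then applies \emph{both} Lemma~\ref{lem:primal-bound} and Lemma~\ref{lem:dual-bound} to the term $\c^T\x_k-\b^T\y_k$ at once, arriving at $\tfrac{\xi}{2}\,l+\tfrac{\eta}{2}\Vert\b-A\x_k\Vert_1\le U_k+\tfrac{\xi^2}{2\sqrt{k}}+\tfrac{m\eta^2}{2\sqrt{k-1}}$, and then observes that both left-hand terms are nonnegative so each is individually bounded by the right-hand side. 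This is precisely the ``coupled'' route you feared would leave cross terms; it does not, because the unwanted nonnegative term is simply discarded at the last step (though the cross constants $\xi^2/(\eta\sqrt{k})$ and $\eta^2/(\xi\sqrt{k-1})$ do survive in the stated bounds, so the final estimates are not decoupled in the sense you describe). Your alternative device --- inserting the optimal points $\y^*\in\Gamma$ and $\x^*\in\Delta$ as additional comparison points so that the objective term cancels and only one distance lemma is needed per bound --- is valid, checks out line by line, and yields the same form with marginally smaller constants (e.g.\ $\Vert\y^*\Vert^2\le m\eta^2/4$ in place of the paper's $m\eta^2$); what it buys is conceptual cleanliness rather than necessity. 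One remark applying to both derivations: the third term actually obtained in \eqref{eq:op2} and \eqref{eq:op3} carries a factor of $m$ (coming from $\Vert\eta\sgn(\b-A\x_k)\Vert^2=m\eta^2$, respectively $\Vert\y^*\Vert^2\le m\eta^2/4$) that the theorem statement omits; this is a discrepancy in the paper's own statement, not a defect of your argument.
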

\begin{proof}
  It is immediate from the algorithm that \eqref{eq:op1} holds. 
 Next, notice that $\s_k$ is a solution to
  \begin{equation*}
    \max_{\s \in \Gamma} \left\{ \s^T(\b-A\x_k) - \frac{1}{2\sqrt{k-1}}\Vert\s\Vert^2\right\},
  \end{equation*}
  so for arbitrary $\s \in \Gamma$,
  \begin{equation} \label{eq:sk_compare}
    \begin{split} 
    \s_k^T(\b-A\x_k) - \frac{1}{2\sqrt{k}}\Vert\s_k\Vert^2 &\geq \s_k^T(\b-A\x_k) - \frac{1}{2\sqrt{k-1}}\Vert\s_k\Vert^2,\\
                                                           &\geq \s^T(\b-A\x_k) - \frac{1}{2\sqrt{k-1}}\Vert\s\Vert^2.
    \end{split}
  \end{equation}
  Setting $\s = \bz$ gives
  \[
    \s_k^T(\b-A\x_k) - \frac{1}{2\sqrt{k}}\Vert\s_k\Vert^2 \geq 0.
  \]
  Similarly, $\r_{k+1}$ is a solution to
  \begin{equation} \label{eqn:r-opt}
    \min_{\r} \left\{\r^T(\c-A^T\y_k)+\frac{1}{2\sqrt{k}}\Vert\r\Vert^2\right\},
  \end{equation}
  so for arbitrary $\r \in \Delta$,
  \begin{equation} \label{eq:rk_compare}
    \r_{k+1}^T(\c-A^T\y_k)+\frac{1}{2\sqrt{k}}\Vert\r_{k+1}\Vert^2 \leq \r^T(\c-A^T\y_k)+\frac{1}{2\sqrt{k}}\Vert\r\Vert^2.
  \end{equation}
  Setting $\r = \bz$ gives
  \begin{equation} \label{eq:rk_compare_zero}
    -\r_{k+1}^T(\c-A^T\y_k)-\frac{1}{2\sqrt{k}}\Vert\r_{k+1}\Vert^2 \geq 0.
  \end{equation}
  The above two results imply with \eqref{eq:udef} that 
  $U_k \geq \c^T\x_k - \b^T\y_k$, thus establishing \eqref{eq:op4}.

  Setting $\s := \eta\cdot\sgn(\b - A\x_k) \in \Gamma$ in \eqref{eq:sk_compare} gives the bound
  \begin{equation} \label{eq:sk_compare2}
    \s_k^T(\b - A\x_k) - \frac{1}{2\sqrt{k}}\lVert\s_k\rVert^2 \geq \eta\lVert \b - A\x_k\rVert_1 - \frac{1}{2\sqrt{k-1}}m\eta^2.
  \end{equation}
  Similarly, set $\r := \xi\e_j$ where $j = \argmax_j \{ \e_j^T(A^T\y_k - \c)\}$. Noting $\r \in \Delta$, we can use \eqref{eq:rk_compare}
  to obtain the bound
  \begin{equation*} 
      -\r_{k+1}^T(\c-A^T\y_k)-\frac{1}{2\sqrt{k}}\Vert\r_{k+1}\Vert^2 \geq \xi\cdot \max_j\{\e_j^T(A^T\y_k - \c)\} - \frac{1}{2\sqrt{k}}\xi^2.
  \end{equation*}
  In fact, from \eqref{eq:rk_compare_zero} the left-hand side above is nonnegative. It follows that we can strengthen the above bound to
  \begin{equation} \label{eq:rk_compare2}
      -\r_{k+1}^T(\c-A^T\y_k)-\frac{1}{2\sqrt{k}}\Vert\r_{k+1}\Vert^2 \geq \xi\cdot l - \frac{1}{2\sqrt{k}}\xi^2,
  \end{equation}
  where $l = \max(0,\max_j\{\e_j^T(A^T\y_k - \c)\})$.
  Using Lemma~\ref{lem:primal-bound}, Lemma~\ref{lem:dual-bound}, \eqref{eq:udef}, \eqref{eq:sk_compare2}, and \eqref{eq:rk_compare2}, we may write
  \begin{align*}
    U_k &= -\r_{k+1}^T(\c-A^T\y_k)-\frac{1}{2\sqrt{k}}\Vert\r_{k+1}\Vert^2
    + \s_k^T(\b-A\x_k) - \frac{1}{2\sqrt{k}}\Vert\s_k\Vert^2 + \c^T\x_k-\b^T\y_k,\\
           &\geq \xi\cdot l - \frac{1}{2\sqrt{k}}\xi^2
    + \eta\lVert \b - A\x_k\rVert_1 - \frac{1}{2\sqrt{k-1}}m\eta^2 + \c^T\x_k - \b^T\y_k,\\
           &\geq \xi\cdot l - \frac{1}{2\sqrt{k}}\xi^2
    + \eta\lVert \b - A\x_k\rVert_1 - \frac{1}{2\sqrt{k-1}}m\eta^2 + p^* - \lVert\y^*\rVert_{\infty}\lVert\b - A\x_k\rVert_1 - d^* - \lVert\x^*\rVert_1\cdot l,\\
           &\geq \xi\cdot l - \frac{1}{2\sqrt{k}}\xi^2
    + \eta\lVert \b - A\x_k\rVert_1 - \frac{1}{2\sqrt{k-1}}m\eta^2 + p^* - d^* - \frac{\eta}{2}\lVert\b - A\x_k\rVert_1 - \frac{\xi}{2}\cdot l\\
           &= \frac{\xi}{2}\cdot l - \frac{1}{2\sqrt{k}}\xi^2 + \frac{\eta}{2}\lVert \b - A\x_k\rVert_1 - \frac{1}{2\sqrt{k-1}}m\eta^2,
  \end{align*}
  where the fourth line used the assumption \eqref{eq:xietaassum} and the final line used the fact that $p^* = d^*$. We are left with the bound
  \begin{equation*}
    \frac{\xi}{2}\cdot l + \frac{\eta}{2}\lVert\b - A\x_k\rVert_1 \leq U_k + \frac{\xi^2}{2\sqrt{k}} + \frac{m\eta^2}{2\sqrt{k-1}},
  \end{equation*}
  where both terms on the left are nonnegative. It follows that each term must be individually bounded by the right-hand side above.
  This establishes \eqref{eq:op2} and \eqref{eq:op3}.
\end{proof}

\begin{theorem}
  The iterates of FWLP-P converge to an $\epsilon$-optimal solution of \eqref{eq:std-lp} and its dual \eqref{eq:std-dual} after $\bigO(1/\epsilon^2)$ iterations.
\end{theorem}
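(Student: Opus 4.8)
The plan is to read this final theorem as an immediate corollary of the two main results already in hand: Theorem~\ref{thm:Ukdec}, which gives the decay $U_k=\bigO(1/\sqrt{k})$, and Theorem~\ref{thm:boundedUk_optdist}, which bounds the primal infeasibility, the dual infeasibility, and the objective gap of the iterate $(\x_k,\y_k)$ in terms of $U_k$ plus explicitly decaying terms. First I would pin down a precise notion of $\epsilon$-optimality suited to these bounds: the pair $(\x_k,\y_k)$ is $\epsilon$-optimal if $\x_k\ge\bz$, the primal residual $\Vert\b-A\x_k\Vert_1$ is at most $\epsilon$, the dual residual $\max(0,\max_j\{\e_j^T(A^T\y_k-\c)\})$ is at most $\epsilon$, and the objective gap $\c^T\x_k-\b^T\y_k$ is at most $\epsilon$. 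These are exactly the four quantities controlled by \eqref{eq:op1}--\eqref{eq:op4}.

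Next I would reindex Theorem~\ref{thm:Ukdec} to read $U_k\le F/\sqrt{k-1}$ for $k\ge 3$ and substitute this into each of \eqref{eq:op2}, \eqref{eq:op3}, and \eqref{eq:op4}. Because every remaining summand in those three bounds is a fixed constant times $1/\sqrt{k}$ or $1/\sqrt{k-1}$, and since $1/\sqrt{k-1}\le\sqrt{2}/\sqrt{k}$ for $k\ge 2$, each of the three optimality measures is bounded by $G/\sqrt{k}$ for a single constant $G$ that depends only on the problem data and on $\xi,\eta$ (formed by collecting the coefficients $F$, $\xi^2/\eta$, $\eta$, $\xi$, $\eta^2/\xi$, etc., and taking the maximum of the three resulting constants). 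Property \eqref{eq:op1} supplies $\x_k\ge\bz$ at no cost. Solving $G/\sqrt{k}\le\epsilon$ then gives $k\ge G^2/\epsilon^2$, so after $\bigO(1/\epsilon^2)$ iterations all three residual measures are simultaneously at most $\epsilon$ and the iterate is $\epsilon$-optimal.

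If one additionally wants two-sided control of the objective gap rather than only the upper bound in \eqref{eq:op4}, I would invoke Lemmas~\ref{lem:primal-bound} and \ref{lem:dual-bound}. Applying them to $\hat\x=\x_k$ and $\hat\y=\y_k$ gives $\c^T\x_k\ge p^*-\Vert\y^*\Vert_\infty\Vert\b-A\x_k\Vert_1$ and $\b^T\y_k\le d^*+\Vert\x^*\Vert_1\cdot l$ with $l$ the dual residual; subtracting and using $p^*=d^*$ yields $\c^T\x_k-\b^T\y_k\ge -\Vert\y^*\Vert_\infty\Vert\b-A\x_k\Vert_1-\Vert\x^*\Vert_1\cdot l$. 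Since both residuals on the right are already shown to be $\bigO(1/\sqrt{k})$, this produces a matching lower bound and hence $|\c^T\x_k-\b^T\y_k|=\bigO(1/\sqrt{k})$.

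I do not expect any genuinely hard step here; the substantive work was the $1/\sqrt{k}$ decay of $U_k$ and the infeasibility bounds, and this theorem is essentially their combination. The only care required is bookkeeping: verifying that the three separate constants merge into a single $G$, handling the mild distinction between $\sqrt{k}$ and $\sqrt{k-1}$, and restricting attention to $k\ge 3$ so that the reindexed bound on $U_k$ applies. Converting the resulting $\bigO(1/\sqrt{k})$ rate into an iteration count is then just inverting the square root, which is where the $\bigO(1/\epsilon^2)$ comes from.
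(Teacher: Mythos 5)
Your proposal is correct and follows exactly the paper's route: the paper's own proof is the single sentence that the result ``follows immediately by applying the bound from Theorem~\ref{thm:Ukdec} in Theorem~\ref{thm:boundedUk_optdist},'' and your write-up simply makes that combination explicit (reindexing the $U_k$ bound, merging constants, and inverting $G/\sqrt{k}\le\epsilon$). The extra paragraph on two-sided control of the objective gap via Lemmas~\ref{lem:primal-bound} and \ref{lem:dual-bound} is a sound, if optional, refinement beyond what the paper states.
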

\begin{proof}
  This follows immediately by applying the bound from Theorem~\ref{thm:Ukdec} in Theorem~\ref{thm:boundedUk_optdist}.
\end{proof}

\section{Relating $\bm{U_k}$ to the standard primal-dual gap}
\label{sec:traditionalgap}
Consider the general saddle-point problem
\[
  \min_{\x\in\X}\max_{\y\in\Y} f(\x,\y).
\]
A standard measure of the optimality gap of an iteration $(\x_k,\y_k)$ used in the analysis of many primal-dual algorithms for saddle-point problems (for example, \cite{Chambolle2011, Gidel2017, Nemirovski2004}) is the primal-dual gap:
\begin{equation} \label{eq:pd_gap}
  \max \left\{f(\x_k,\hat{\y}_k) - f(\hat{\x}_k,\y_k) : \hat\x_k \in \X, \hat\y_k \in \Y \right\}.
\end{equation}
Obviously, the solution to \eqref{eq:pd_gap} is
\begin{align*}
  \hat{\x}_k = \argmin \{f(\x, \y_k) : \x \in \X\},\\
  \hat{\y}_k = \argmax \{f(\x_k, \y) : \y \in \Y\}.
\end{align*}
It is noted in \cite{PDLP2} that for the saddle-point problem associated with LP
\[
  \min_{\x\geq\bz}\max_{\y\in\R^m} \c^T\x + \y^T(\b - A\x),
\]
the primal-dual gap
can be infinite, since the feasible set $\R^n_+\times\R^m$ is unbounded. This is not an issue for the modified saddle-point formulation \eqref{eq:fwlp_saddle}
used in our analysis, since the feasible set $\Delta \times \Gamma$ is bounded by virtue of the redundant constraints. 

Recall the definition of $\L(\x,\y)$ from \eqref{eq:fwlp_saddle}.    Let 
\[\mathcal{M}_k:=\max\{\L(\x_k,\s)-\L(\r,\y_k):(\r,\s)\in\Delta\times \Gamma\},\]
which is the specialization of \eqref{eq:pd_gap}
to our setting.  We argue that $\mathcal{M}_k$ is a perturbation of $U_k$ via the following bound:
\begin{align*}
|\mathcal{M}_k-U_k|
&=
\left|
\max_{\s\in\Gamma}[(\b-A\x_k)^T\s+\c^T\x_k]-\min_{\r\in\Delta}[(\c-A^T\y_k)^T\r+\b^T\y_k]-U_k
\right|
\\
&\le
\left|
\max_{\s\in\Gamma}\left[(\b-A\x_k)^T\s+\c^T\x_k-\frac{\Vert\s\Vert^2}{2\sqrt{k-1}}\right]-\min_{\r\in\Delta}\left[(\c-A^T\y_k)^T\r+\b^T\y_k+\frac{\Vert\r\Vert^2}{2\sqrt{k}}\right]-U_k
\right| 
\\
&\hphantom{\le}\quad\mbox{}+
\max_{(\r,\s)\in\Delta\times\Gamma}
\left|
\frac{\Vert\s\Vert^2}{2\sqrt{k-1}}+\frac{\Vert\r\Vert^2}{2\sqrt{k}}
\right|
\\
&\le
\left|
\left[(\b-A\x_k)^T\s_k+\c^T\x_k-\frac{\Vert\s_k\Vert^2}{2\sqrt{k-1}}\right]-\left[(\c-A^T\y_k)^T\r_{k+1}+\b^T\y_k+\frac{\Vert\r_{k+1}\Vert^2}{2\sqrt{k}}\right]-U_k
\right|\\
&\hphantom{\le}\quad\mbox{}
+\frac{m\eta^2}{2\sqrt{k-1}}+\frac{\xi^2}{2\sqrt{k}}
\\
&\le
\frac{m\eta^2+\xi^2}{2\sqrt{k-1}} + \bigO(k^{-3/2}).
\end{align*}
Here, the second line adds and subtracts the same terms and then applies the triangle inequality for $|\cdot|$.  The third line uses the definitions of $\s_k$ from \eqref{eq:sk1} and $\r_k$ from \eqref{eq:rk1}.  The fourth line uses \eqref{eq:udef}, noting that the terms all cancel out except for the difference between a denominator of $2\sqrt{k-1}$ versus a denominator of $2\sqrt{k}$.  This small remainder is written as $\bigO(k^{-3/2})$ on the fourth line.


\section{Efficient implementation of FWLP and FWLP-P} \label{sec:implementation}
A major advantage of FWLP and FWLP-P is their low computational cost per iteration. Naïve implementations of Algorithms~\ref{alg:fwlp} and \ref{alg:fwlp-p}
have iteration cost bounded by the cost of a full matrix-vector product. We discuss below how this can be significantly improved with an efficient implementation.
\subsection{FWLP}
Consider, e.g., the iteration $k=1$.  Suppose we store $A\x_1$. 
In Algorithm~\ref{alg:fwlp} we can perform an extra step to update
$A\x_2$ by either computing
\[
  A\x_2 = \frac{k}{2}A\x_1,
\]
or
\[
  A\x_2 = \frac{k}{2}A\x_1 + \frac{\xi}{2}A\e_i,
\]
where $i$ is the index of the most violated dual constraint, computed as $\argmin \{c_i - \e_i^TA^T\y_1\}$. Ignoring the cost of computing $i$,
such an update runs in $\bigO(m + n)$ operations per iteration (cost of updating $\x_k$ and $A\x_k$). Note that $\bigO(m+n)$ can be reduced to $\bigO(m)$ if we keep track of the product of scaling factors $k/(k+1)$ of $\x_{k+1}$ in a separate variable.
In contrast, the naïve implementation of the primal update in FWLP, where one instead performs matrix-vector products,
runs in $\bigO(mn)$ operations per iteration.

The dual update for, e.g., $k=1$, is given by
\[
  \y_2 = \frac{1}{2}\y_1 + \frac{\eta}{2}\sgn(\b - A\x_2),
\]
which also runs in $\bigO(m)$ since $A\x_2$ has already been computed in the primal step. The naïve implementation again takes $\bigO(mn)$ operations.

To improve the cost of computing the index $i$, first note that in Algorithm~\ref{alg:fwlp} we only need to compute $\e_i^TA^T\y_k$ for indices $i$
such that there is a possibility that $i = \argmin_j \{c_j - \e_j^TA\y_k\}$. This could be implemented by a data structure to store the
indices $[n]$ in some order so that only the possibly most infeasible indices need to be considered at each iteration. In more detail, suppose $j_k$ indexes the most violated dual constraint on iteration $k$, i.e., $j_k=\argmin_j[\c-A^T\y_k]_j$.  Suppose $j\in[n]$ is some other index.  Based on the value of $[\c-A^T\y_k]_j-[\c-A^T\y_k]_{j_k}$ and prior knowledge of the stepsize (which tends to $0$ with $k$), one knows in advance that constraint $j$ could not be the most violated constraint prior to iteration $k'$, where $k'$ is a computable index satisfying $k'>k$.  Then constraint $j$ does not even have to be considered by the algorithm on all iterations between $k$ and $k'$.

Thus, the algorithm maintains some subset of constraints $\S_k$ on iteration $k$ that need to be examined for possibly being the most violated.  
The computation of $A^T\y_k$ in the primal update thus runs in $\bigO(|\S_{k}|\cdot m)$ plus the cost of updating the data
structure to obtain $\S_{k+1}$.  Naturally, the estimate $\bigO(|\S_{k}|\cdot m)$ is further reduced if $A$ is sparse.

The total cost of iteration $k$ for this implementation of FWLP is thus
\[
  \bigO(n + \lvert \S_k\rvert\cdot m),
\]
plus the cost of updating the proposed data structure and precomputation. For large problems, one would expect that $\lvert\S_k\rvert \ll n$, allowing significant speedup to be achieved
by using the proposed efficient implementation of FWLP.  And, as mentioned earlier, the ``$n$'' term may be dropped with a careful implementation for updating $\x_k$, and the ``$\lvert \S_k\rvert\cdot m$'' term is reduced in the presence of sparsity.

\subsection{FWLP-P} \label{sec:efficient_fwlp}
The iterations of FWLP-P differ from FWLP in that the steps \eqref{eq:rk1} and \eqref{eq:sk1} are more computationally involved.

To solve \eqref{eq:rk1}, we equivalently consider the projection form \eqref{eq:rkproj}, which amounts to solving the quadratic programming problem
\begin{argmini*} 
{\x} { \frac{1}{2}\lVert \w_0 - \x\rVert^2 }{}{} 
\addConstraint{ \e^T\x }{ \leq \xi }{}
\addConstraint{ \x }{ \geq \bz }{},
\end{argmini*}
where $\w_0 = \sqrt{k}(A^T\y_k - \c)$. Expanding, rescaling, and dropping constant terms gives the equivalent problem
\begin{argmini} 
{\x} {-\w^T\x + \frac{1}{2}\lVert\x\rVert^2 }{\label{eq:rk1_qp}}{} 
\addConstraint{ \e^T\x }{ \leq 1 }{}
\addConstraint{ \x }{ \geq \bz }{},
\end{argmini}
where $\w=\w_0/\xi$.
The KKT conditions of \eqref{eq:rk1_qp} are as follows
\begin{align}
  -\w + \x + \mu\e - \z &= \bz,\label{eq:kkt_stat}\\
    \e^T\x &\leq 1,\label{eq:kkt_pf1}\\
    \mu(\e^T\x - 1) &= 0,\label{eq:kkt_cs_p}\\
    \z^T\x &= 0,\label{eq:kkt_cs_d}\\
    \x,\mu,\z &\geq \bz\label{eq:kkt_nn}.
\end{align}
We first prove the following lemma, which will become useful once we define our algorithm for solving \eqref{eq:rk1_qp}.
\begin{lemma} \label{lem:wcases}
  Suppose $(\x,\mu,\z)$ form a KKT solution for \eqref{eq:rk1_qp} with $\e^T\x = 1$.
  There must exist an index $j \in [n]$ such that $w_j > \mu$.
  
  Furthermore, we have the following cases regardless of whether 
  $\e^T\x = 1$.
  \begin{enumerate}[label=(\roman*)]
    \item If $w_j > \mu$ for some $j \in [n]$ , then $x_j > 0$ and $z_j = 0$.
    \item If $w_j < \mu$ for some $j \in [n]$, then $x_j = 0$ and $z_j > 0$.
    \item If $w_j = \mu$ for some $j \in [n]$, then $x_j = z_j = 0$.
  \end{enumerate}
\end{lemma}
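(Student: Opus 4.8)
The plan is to reduce everything to a coordinatewise reading of the stationarity condition \eqref{eq:kkt_stat}, which in its $j$th component states $x_j - z_j = w_j - \mu$. The key preliminary observation is that the aggregate complementarity \eqref{eq:kkt_cs_d} splits into $n$ scalar conditions: because $\x,\z\ge\bz$ by \eqref{eq:kkt_nn}, each product $x_j z_j$ is nonnegative, so the vanishing of their sum $\z^T\x$ forces $x_j z_j = 0$ for every individual index $j$. This per-coordinate complementarity is what makes the case analysis go through.

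With the identity $x_j - z_j = w_j - \mu$ and $x_j z_j = 0$ in hand, the three cases are immediate sign arguments that require no use of the constraint $\e^T\x = 1$. For case (i), $w_j > \mu$ gives $x_j - z_j > 0$; since $z_j \ge 0$ this forces $x_j > 0$, and then $x_j z_j = 0$ yields $z_j = 0$. Case (ii) is the mirror image: $w_j < \mu$ forces $z_j > x_j \ge 0$, hence $z_j > 0$, and complementarity then gives $x_j = 0$. For case (iii), $w_j = \mu$ gives $x_j = z_j$, which combined with $x_j z_j = 0$ forces $x_j = z_j = 0$.

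For the existence claim I would argue by contradiction, and this is the only part of the lemma that actually invokes the hypothesis $\e^T\x = 1$. If no index satisfied $w_j > \mu$, then every coordinate would fall under case (ii) or case (iii), each of which gives $x_j = 0$; hence $\x = \bz$ and therefore $\e^T\x = 0$, contradicting $\e^T\x = 1$. Consequently some index $j$ must satisfy $w_j > \mu$.

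There is no substantial obstacle here: the entire statement is a routine dissection of the KKT system. The one point deserving a moment's care is the passage from the aggregate complementary slackness \eqref{eq:kkt_cs_d} to its coordinatewise form, which relies essentially on the sign constraints \eqref{eq:kkt_nn}; once that step is in place, everything else is bookkeeping with the scalar identity $x_j - z_j = w_j - \mu$.
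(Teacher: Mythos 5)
Your proposal is correct and follows essentially the same route as the paper: read the stationarity condition coordinatewise as $w_j = x_j - z_j + \mu$, use nonnegativity to split $\z^T\x=0$ into $x_jz_j=0$, and run the sign analysis. The only cosmetic difference is that you prove the existence claim by contradiction via cases (ii)--(iii), whereas the paper argues directly that $\e^T\x=1$ forces some $x_j>0$ and hence $w_j = x_j + \mu > \mu$.
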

\begin{proof}
  By \eqref{eq:kkt_stat}, we have
  \begin{equation} \label{eq:kkt_stat2}
    w_i = x_i - z_i + \mu
  \end{equation}
  for all $i \in [n]$. The condition $\e^T\x = 1$ along with the nonnegativity of $\x$ from
  \eqref{eq:kkt_nn} implies that there must exist an index $j \in [n]$ such that $x_j > 0$.
  Now, the complementarity condition \eqref{eq:kkt_cs_d} along with the nonnegativity of $\z$ from \eqref{eq:kkt_nn}
  imply that $z_j = 0$. It follows from \eqref{eq:kkt_stat2} that $w_j = x_j + \mu > \mu$.

  Now for (i): suppose, $w_j > \mu$ for some $j \in [n]$. By \eqref{eq:kkt_stat2}, $x_j - z_j + \mu > \mu$, which implies $x_j > z_j$. But since
  $z_j \geq 0$ it follows that $x_j > 0$. By applying \eqref{eq:kkt_cs_d} we see that $z_j = 0$.
  The proof for (ii) is analogous.

  For (iii) suppose, $w_j = \mu$ for some $j \in [n]$. By \eqref{eq:kkt_stat2}, $x_j - z_j + \mu = \mu$, which implies $x_j = z_j$.
  By applying \eqref{eq:kkt_cs_d} we must have $x_j = z_j = 0$.
\end{proof}

We now state Algorithm~\ref{alg:rk1} for solving \eqref{eq:rk1_qp}.  Note that the algorithm computes the KKT multipliers $\mu,\z$ as well as $\x$ in order to illustrate its correctness, but in the FWLP-P code, the computation of $\z$ is not needed.
\begin{algorithm}[H]
\begin{algorithmic}[1]
\Require Linear term coefficient $\w \in \R^n$.
\vspace{0.2em}
\State Sort $\w$ and store result in $\bar{\w}$, along with the permutation function $\sigma : [n] \to [n]$
which maps the indices of $\bar{\w}$ back to their original positions in $\w$:
\begin{equation} \label{eq:rk1_sort}
  (\bar{\w}, \sigma) := \text{sort}(\w).
\end{equation}
\State Compute the cumulative sum of $\bar{\w}$:
\begin{equation}
  S := \text{cumsum}(\bar{\w}).
\end{equation}
\For{$j=1,2,\ldots, n$}
    \State Compute
    \begin{equation}
      \mu := \frac{S(j)-1}{j}
    \end{equation}
    \If{$\bar{w}_j \geq \mu$ and either $j = n$, or $\bar{w}_{j+1} \leq \mu$}
      \State Record index $j$ in the variable $J$.
      \Break
    \EndIf
\EndFor
\If{$\mu \geq 0$}
  \State Construct a KKT solution such that $\e^T\x = 1$:
  \begin{equation} \label{eq:construct_kkt_bdry}
    \begin{split}
      &\x := \bz,\\
      &\x(\sigma(1:J)) := \bar{\w}(1:J) - \mu\cdot\e(1:J),\\
      &\z := \mu\e - \w,\\
      &\z(\sigma(1:J)) := \bz.
    \end{split}
  \end{equation}
  \State \Return
\EndIf
\State Otherwise, construct a KKT solution such that $\e^T\x < 1$:
\begin{equation} \label{eq:construct_kkt_int}
  \begin{split}
    &\x := \max(\bz,\w),\\
    &\mu := 0,\\
    &\z := \x - \w.
  \end{split}
\end{equation}
\State \Return
\end{algorithmic}
\caption{An efficient algorithm for finding a KKT solution of \eqref{eq:rk1_qp}.}
\label{alg:rk1}
\end{algorithm}

We now prove that Algorithm~\ref{alg:rk1} is finite and correct.
\begin{theorem}
  Algorithm~\ref{alg:rk1} finds a KKT solution to \eqref{eq:rk1_qp} in finite time.
\end{theorem}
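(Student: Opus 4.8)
The plan is to split the statement into a finiteness part and a correctness part. Finiteness is immediate: the sort in \eqref{eq:rk1_sort} and the cumulative sum terminate, and the \textbf{for} loop runs at most $n$ times, so all that remains is to show the loop always reaches a \textbf{break}, i.e.\ that a valid index $J$ is always recorded. Throughout I would assume $\bar\w$ is sorted in non-increasing order, write $S(j)=\sum_{i=1}^{j}\bar w_i$, and abbreviate the trial multiplier computed at step $j$ as $\mu_j:=(S(j)-1)/j$.

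The engine of the whole argument is the one-line recursion
\[
\mu_{j+1}-\mu_j=\frac{\bar w_{j+1}-\mu_j}{j+1},
\]
obtained by writing $\mu_{j+1}$ as the convex combination $(j\mu_j+\bar w_{j+1})/(j+1)$. From it I would extract two facts. First, $\mu_{j+1}>\mu_j\iff\bar w_{j+1}>\mu_j$, so the trial multiplier strictly increases exactly as long as the incoming coordinate exceeds it. Second, once $\bar w_{j+1}\le\mu_j$ occurs it persists: the convex-combination form gives $\mu_{j+1}\ge\bar w_{j+1}\ge\bar w_{j+2}$, hence $\bar w_{j+2}\le\mu_{j+1}$, and inductively $\bar w_{i+1}\le\mu_i$ for all larger $i$. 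Thus there is a well-defined first index $J$ (taking $J=n$ if none occurs earlier) with $\bar w_{J+1}\le\mu_J$, and $\mu_1<\cdots<\mu_J$ is strictly increasing up to it. Using the companion identity $\bar w_{j+1}-\mu_{j+1}=\tfrac{j}{j+1}(\bar w_{j+1}-\mu_j)$ I would check $\bar w_J\ge\mu_J$ (in fact strictly), so the loop's test $\bar w_j\ge\mu_j$ together with $\bar w_{j+1}\le\mu_j$ fires precisely at $j=J$ and at no earlier index; this proves the loop terminates with $J$ recorded.

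With $J$ in hand I would verify the KKT system \eqref{eq:kkt_stat}--\eqref{eq:kkt_nn} in the two branches. In the boundary branch $\mu_J\ge0$: stationarity \eqref{eq:kkt_stat} and the complementarity \eqref{eq:kkt_cs_d} hold by the construction \eqref{eq:construct_kkt_bdry}; the cap is tight because $\e^T\x=S(J)-J\mu_J=1$ by the definition of $\mu_J$, which also yields \eqref{eq:kkt_cs_p}; and nonnegativity of $\x$ and $\z$ follows from the sorted-order sandwiches $\bar w_i\ge\bar w_J\ge\mu_J$ for $i\le J$ and $\bar w_i\le\bar w_{J+1}\le\mu_J$ for $i>J$. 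In the interior branch $\mu_J<0$, the construction \eqref{eq:construct_kkt_int} sets $\x=\max(\bz,\w)$, $\mu=0$, $\z=\x-\w$, which satisfies stationarity, both complementarity conditions, and all nonnegativities by inspection.

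The one genuinely non-obvious step, and the part I expect to be the crux, is primal feasibility $\e^T\x\le1$ in the interior branch, i.e.\ $\sum_i\max(0,w_i)\le1$. Here I would argue that when $\mu_J<0$ the positive coordinates are all contained in the first $J$ sorted positions: if $J<n$ then $\bar w_{J+1}\le\mu_J<0$ forces every later coordinate to be negative, while if $J=n$ the containment is vacuous. Letting $P\le J$ be the number of positive coordinates (the bound being trivial when $P=0$), the strict monotonicity $\mu_P\le\mu_J<0$ established above gives $S(P)=P\mu_P+1<1$, and since $\sum_i\max(0,w_i)=S(P)$ this is exactly the required inequality. Combining the two branches shows the returned triple is a KKT point, and together with the finiteness reduction this proves the theorem.
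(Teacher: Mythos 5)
Your proof is correct, and it takes a genuinely different route from the paper's. The paper argues \emph{backwards} from an assumed KKT triple: it invokes Lemma~\ref{lem:wcases} to show that when the cap is active there is some $w_j>\mu$ (so $J$ is well defined), infers the sign pattern of $(\x,\z)$ on $[J]$ and its complement, derives the formula $\mu=(S(J)-1)/J$ from stationarity restricted to $[J]$, and then asserts that the triple built in \eqref{eq:construct_kkt_bdry} satisfies the KKT system; the interior case is dispatched by noting $\mu=0$ and checking \eqref{eq:construct_kkt_int}. You instead argue \emph{forwards} from the algorithm itself: the recursion $\mu_{j+1}-\mu_j=(\bar w_{j+1}-\mu_j)/(j+1)$ and its companion identity give you, with no reference to any pre-existing KKT point, that the loop's test fires at exactly one index $J$ and that $\mu_1<\cdots<\mu_J$, and you then verify every KKT condition of the returned triple directly. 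Your route is more self-contained (it needs neither Lemma~\ref{lem:wcases} nor the existence of a KKT solution as a starting hypothesis) and it supplies two details the paper leaves implicit: that the loop always reaches a \textbf{break} even when the interior branch is ultimately taken, and the primal-feasibility bound $\sum_i\max(0,w_i)\le 1$ in that branch, which you obtain from $\mu_P\le\mu_J<0$ and $S(P)=P\mu_P+1$. The paper's version is shorter because Lemma~\ref{lem:wcases} has already done the combinatorial work; yours is the more complete verification. Both are valid.
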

\begin{proof}
  It is clear that the algorithm runs in linear time except for the sorting.

  Consider the case where $(\x,\mu,\z)$ satisfy the KKT conditions with $\e^T\x = 1$. Then by Lemma~\ref{lem:wcases} there must exist
  an index $j\in [n]$ such that $w_j > \mu$, so $J$ must be well-defined on line 6 of the algorithm. Moreover, this means that
  for all $j \in [J], \bar{w}_j \geq \mu_j$, and by applying Lemma~\ref{lem:wcases} we get that $x_j \geq 0$ and $z_j = 0$ for all $j \in [J]$.
  A similar argument tells us that $\bar{w}_j < \mu_j$ and thus $x_j = 0, z_j > 0$ for $j \in J' := [n]\setminus [J]$.
  Using that $\e^T\x = 1$ and that the entries of $\x$ must be zero outside of $[J]$, as well as $z_j = 0$ for all $j \in [J]$, we may multiply \eqref{eq:kkt_stat} by $\e_J^T$ and rewrite to obtain:
  \[
    \e_J^T\w = \e_J^T\x - \e_J^T\z + J\mu = 1 + J\mu,
  \]
  where $\e_J$ is taken to be the vector with ones in entries $[J]$ and zero everywhere else. Rearranging gives the formula for
  $\mu$ used in Algorithm~\ref{alg:rk1}:
  \[
    \mu = \frac{S(J) - 1}{J}
  \]
  where we note that $S(J) = \e_J^T\w$. It follows from assumption of this case $\e^T\x=1$ that $\mu\ge 0$ in line 10 of the algorithm.
  After equations \eqref{eq:construct_kkt_bdry}, the algorithm has constructed an $\x$ such that
  \[
    \x_J = \w_J - \mu\e_J,\quad \x_{J^{'}} = \bz,
  \]
  and a $\z$ such that
  \[
    \z_J = \bz,\quad \z_{J'} = \mu\e_{J'} - \w_{J'}.
  \]
  It is now easy to check that the $(\x,\mu,\z)$ constructed by the algorithm satisfy the KKT conditions.

  Now consider the case where $(\x,\mu,\z)$ satisfy the KKT conditions with $\e^T\x < 1$. From \eqref{eq:kkt_cs_p},
  $\mu = 0$ and clearly the $\x$ and $\z$ defined in \eqref{eq:construct_kkt_int} satisfy the remaining KKT conditions.
\end{proof}
The cost of Algorithm~\ref{alg:rk1} is bounded below by the cost of either the matrix-vector product $A^T\y_k$ in computing $\w$ or the sort \eqref{eq:rk1_sort},
which take $\bigO(mn)$ and $\bigO(n\log(n))$ operations respectively.
Meanwhile, \eqref{eq:xk1} takes $\bigO(n)$ operations, so overall the two run in $\bigO(n + mn)$ operations if we assume $m \geq \log(n)$.

Algorithm~\ref{alg:rk1} could be sped up by noticing that in order to construct $\x$,
we need only knowledge of the most-violated
dual constraints, i.e., those for which $\c - A^T\y_k$ is most negative. Recall from Section~\ref{sec:efficient_fwlp} that
the computation of $A^T\y_k$ would take $\bigO(\lvert \S_k\rvert \cdot m)$ operations, plus the cost of updating
the data structure storing the indices $\S_k$. The sorting computation cost would also be decreased since only the largest entries would need sorting.

We now consider the dual updates \eqref{eq:sk1} and \eqref{eq:yk1}. Step \eqref{eq:sk1} can be solved by considering the projection form $\proj_{\Gamma}(\sqrt{k}(\b - A\x_{k+1}))$ and noting that this has
a well known closed-form solution \cite[Lemma 6.26]{Beck2017}:
\[
  \s_{k+1}(i) = \max(-\eta,\min(\eta,\sqrt{k}[\b - A\x_{k+1}]_i)),
\]
for each $i \in [m]$. The cost of computing $A\x_{k+1}$ is again $\bigO(\lvert \S_k\rvert\cdot m)$ because we can scale $A\x_{k-1}$ and then add the update $A\r_k/k$.  The number of nonzero entries in $\r_k$ is $|\S_{k+1}|$, which is the number of entries that partake in the projection.

\section{Conclusion}
We proposed two primal-dual first-order algorithms, namely FWLP and FWLP-P, for solving linear programming problems
and discussed how both algorithms can be implemented in such a way that significantly improves their efficiency, especially for
large-scale problems.
Our convergence analysis of FWLP-P shows that the algorithm converges to a primal-dual solution with error $\bigO(1/\sqrt{k})$ after
$k$ iterations. Despite this, no convergence proof is known at this time for the simpler and faster algorithm FWLP, and analysis of this algorithm is a topic for
future research.

Another interesting question is how FWLP and FWLP-P can cope with primal or dual infeasibility.  It should be noted that our proof that $U_k\rightarrow 0$ (Theorem~\ref{thm:Ukdec}) did not depend on feasibility nor on the correct choice of $\xi$ and $\eta$, so monitoring $U_k$ cannot diagnose these conditions.  We remark that other first-order algorithms have a theory for infeasibility detection; see, e.g., \cite{applegate2024infeasibility} and \cite{jiang2023range}.  A related question is how the algorithm can detect whether the two parameters $\xi$ and $\eta$ of FWLP and FWLP-P have been chosen correctly.

We showed that FWLP-P converges in the sense that the primal and dual infeasibility measures tend to 0, as does the duality gap.  However, we did not prove convergence of the iterates. In the case that the LP has multiple optimizers, an open question is whether the algorithms converge to a particular optimizer.

\bibliography{fwlp}
\bibliographystyle{plain}
\end{document}